\newcommand{\arrowdraw}[2]{\draw [postaction={decorate}] (#1)--(#2);}
\tikzset{
    >=stealth',
    punkt/.style={
           rectangle,
           rounded corners,
           draw=black, very thick,
           text width=6.5em,
           minimum height=2em,
           text centered},
    pil/.style={
           ->,
           thick,
           shorten <=2pt,
           shorten >=2pt,}
}
\tikzset{every loop/.style={min distance=2mm,in=225,out=135,looseness=10}}
\newcommand\GreenL{\mathcal{L}}
\newcommand\GreenR{\mathcal{R}}
\newtheorem{theorem}{Theorem}[section]
\newtheorem{question}[theorem]{Question}
\newtheorem{lemma}[theorem]{Lemma}
\newtheorem{corollary}[theorem]{Corollary}
\newtheorem{conjecture}[theorem]{Conjecture}
\newtheorem{proposition}[theorem]{Proposition}
\begin{document}

\title[Amenability and Geometry of Semigroups]{Amenability and Geometry of Semigroups}

\keywords{monoid, cancellative monoid, finitely generated, hyperbolic, semimetric space}
\subjclass[2000]{20M05; 05C20}
\maketitle

\begin{center}
    ROBERT D. GRAY\footnote{School of Mathematics, University of East Anglia, Norwich NR4 7TJ, England.
Email \texttt{Robert.D.Gray@uea.ac.uk}.}
\ and \ MARK KAMBITES\footnote{School of Mathematics, University of Manchester, Manchester M13 9PL, England. Email \texttt{Mark.Kambites@manchester.ac.uk}.} \\
\end{center}

\begin{abstract}
We study the connection between amenability, F\o lner conditions and the geometry of finitely generated semigroups. Using
results of Klawe, we show that within an extremely broad class of semigroups (encompassing all groups, left
cancellative semigroups, finite semigroups, compact topological semigroups, inverse semigroups, regular semigroups, commutative semigroups and semigroups with a left, right or two-sided zero element), left amenability coincides with the strong F\o lner condition. Within the same class, we show that a finitely generated semigroup of subexponential growth is left amenable if and only if it is left reversible.
We show that the (weak) F\o lner condition is a left quasi-isometry invariant of finitely generated semigroups, and hence that left amenability is a left quasi-isometry invariant of left cancellative semigroups. We also give a new characterisation of the strong F\o lner condition, in terms of the existence of \textit{weak} F\o lner sets satisfying a local injectivity condition on the relevant translation action of the semigroup. 
\end{abstract}

\section{Introduction}\label{sec_intro}

What are now called amenable groups were introduced in 1929 by von Neumann
\cite{vonNeumann29}, motivated by the desire for a group-theoretic
understanding of paradoxical decompositions such as the Banach-Tarski
paradox. The term ``amenable'' was coined by Day \cite{Day57}, who also
broadened consideration to encompass semigroups. In the decades that
followed, amenability --- in groups,
semigroups and also Banach algebras --- has developed into a major topic
of study, forming a remarkably deep vein of connections
between different areas of mathematics including algebra, analysis, geometry,
combinatorics and dynamics; see \cite{Paterson88} for a comprehensive
introduction and for example \cite{Cecc10} for more recent developments.
Important recent research on amenable semigroups, specifically, includes
for example \cite{Cecc15,Donnelly13,Willson09},

In the context of amenability, groups and semigroups have many similarities,
with many of the key results being shared.
A notable exception, however, appears where finitely generated objects are concerned. Within the study of amenable groups, there is a distinct strand of research focussing on amenability of \textit{finitely generated} groups, and hence linking the subject to combinatorial and geometry group theory. Notable results include the quasi-isometry invariance of
amenability (see \cite[Theorem 10.23]{Ghys89}) and
Grigorchuk's beautiful characterisation
of amenability in terms of
\textit{cogrowth} \cite{Grigorchuk80}.
In contrast, despite widespread interest in finitely generated semigroups both within algebra and from theoretical computer science, amenability of finitely generated semigroups has yet to receive a comparable
level of attention. 
One reason for this is that results about amenability for finitely generated groups are largely built on elementary characterisations in terms of \textit{F\o lner sets} which tend to manifest themselves in natural ways in Cayley graphs, and hence lend themselves to study using the tools and techniques of geometric group theory.  This poses two problems in the semigroup setting.

The first is that the theory of finitely generated semigroups has in 
recent decades been more combinatorial and less geometric than that of 
finitely generated groups: the ideal structure of a semigroup is 
notoriously difficult to capture in a geometric way, leading researchers 
to prefer other tools such as rewriting systems and automata. However, 
recent advances such as the authors' use of \textit{asymmetric geometry}
\cite{K_svarc, K_qsifp, K_semimetric, K_hyper}  seem to be 
bringing a genuinely geometric aspect to the subject.

The second problem is that amenability in semigroups is not characterised by the
F\o lner set property, or indeed by any known elementary combinatorial condition.  To be precise,
the F\o lner condition admits two formulations which are trivially equivalent
in a group (or indeed, a left cancellative semigroup), but not in a general
semigroup.
The weaker of these (generally known as ``the F\o lner condition'' or \textit{FC}) was
studied by Day \cite{Day57} who showed that it was a necessary but not a
sufficient condition for left amenability. The stronger form (``the
strong F\o lner condition'' or \textit{SFC}) was considered by Argabright and Wilde
\cite{Argabright67}; they showed that SFC is sufficient for left amenability,
and that necessity would follow from a conjecture of Sorenson \cite{Sorenson64,Sorenson66}, asserting
that every right cancellative, left amenable semigroup was also left
cancellative. Subsequently, Klawe \cite{Klawe77}
showed that necessity was actually equivalent
to Sorenson's conjecture, before disproving both, by producing a (non-finitely-generated)
example of a left amenable semigroup which was right but not left cancellative; this was subsequently
refined to a finitely generated example by Takahashi \cite{Takahashi03}. Despite considerable further
work in this area (see for example \cite{Yang87}), an elementary combinatorial
characterisation of (left or two-sided) amenability for semigroups remains elusive.

While SFC does not provide an exact characterisation of left amenability
for semigroups in full generality, it can do so within large and important
classes of semigroups. We have already noted that in the presence of left
cancellativity SFC is trivially equivalent to FC (which in general is weaker
that left amenability), and so both FC and SFC characterise left amenability
within the class of left cancellative semigroups. In fact, we shall see below
(Theorem~\ref{thm_klawesfc} and Corollary~\ref{cor_idptsfc}) that SFC is
equivalent to left amenability within
an extremely large class of (not necessarily finitely generated) semigroups including not only left cancellative and commutative semigroups, but 
also semigroups in which every ideal contains an idempotent: the latter
condition encompasses
for example all groups, finite semigroups, compact left or right topological semigroups, inverse semigroups, regular semigroups and semigroups with a left, right or two-sided zero element, and the result therefore
applies to the overwhelming majority of widely studied semigroups.

We also present (in Section~\ref{sec_newsfc})  a new characterisation of SFC, in terms of the existence of \textit{weak} F\o lner sets satisfying a local injectivity condition on the relevant translation action of the semigroup. As well as having potential applications
as a technical lemma for studying SFC, this result is conceptually interesting,
because it gives a new insight
into the difference between FC and SFC, and what exactly it is about
left cancellativity which is important for amenability.


The equivalence of FC and/or SFC with left amenability in such a wide range of semigroups provides
a strong motivation for studying these conditions, and how they
relate to the geometry of finitely generated semigroups: results so obtained will bear directly upon amenability for a very large range of semigroups, and are also likely to give clues as to how amenability itself could be directly studied for finitely generated semigroups in even greater generality. The chief aim of the present paper is to begin this study.

One of our main results (Theorem~\ref{thm_polyklawe}) is that, for semigroups in the same broad class described
above, a finitely generated semigroup of subexponential growth has SFC (and hence is left amenable) if and only if it is left reversible (the latter being a trivially necessary condition for amenability in all semigroups).
We also show (Theorem~\ref{thm_qsi}) that FC is a left quasi-isometry
invariant of finitely generated semigroups, and hence that left amenability
is a left quasi-isometry invariant of finitely generated left cancellative
semigroups. Neither amenability nor SFC is a left quasi-isometry invariant
of finitely generated semigroups more generally, but it remains open whether
these properties can be seen in the right quasi-isometry class, or in the
left and right quasi-isometry classes together.

\section{Analytic, Algebraic and Geometric Conditions}

In this section we briefly recall the definitions of left amenability, the
F\o lner condition and the strong F\o lner condition for semigroups. For
a more detailed introduction we direct the reader to the monograph
of Paterson \cite{Paterson88}. We then show that, mainly as a consequence
of work of Klawe \cite{Klawe77}, the strong F\o lner condition
exactly characterises left amenability within an extremely broad class
of semigroups.

\subsection{Amenability and F\o lner Conditions}
A semigroup $S$ is called \textit{left amenable} if there is a mean
on $l_\infty(S)$ which is invariant under the natural left action
of $S$ on the dual space $l_\infty(S)'$ \cite[Section 0.18]{Paterson88}.
Equivalently \cite[Problem 0.32]{Paterson88}, $S$ is left amenable if
it admits a finitely additive probability measure $\mu$,
defined on all the subsets of $S$, which is \textit{left invariant},
in the sense
that $\mu(a^{-1} X) = \mu(X)$ for all $X \subseteq S$ and $a \in S$. Here,
$a^{-1} X$ denotes the set $\lbrace s \in S \mid as \in X \rbrace$. (Note that
left invariance is strictly weaker than requiring
$\mu(aX) = \mu(X)$ for all $X \subseteq S$ and $a \in S$; semigroups
admitting a finitely additive probability measure satisfying this
much stronger property are called \textit{left measurable} --- see \cite{Sorenson66}
and \cite[Section~5]{Klawe77} for more on this property.)

A semigroup $S$ satisfies \textit{the F\o lner condition} (\textit{FC})
if for every finite subset $H$ of $S$ and every $\epsilon > 0$, there
is a finite subset $F$ of $S$ with $|sF \setminus F| \leq \epsilon |F|$ for
all $s \in H$.
A semigroup $S$ satisfies \textit{the strong F\o lner condition} (\textit{SFC})
if for every finite subset $H$ of $S$ and every $\epsilon > 0$, there
is a finite subset $F$ of $S$ with $|F \setminus sF| \leq \epsilon |F|$
for all $s \in H$.

In a left cancellative semigroup FC and SFC are trivially equivalent,
but without left cancellativity SFC is strictly stronger, because
left translation by an element $s$ can map many elements in a set
$F$ onto a few elements, allowing $sF \setminus F$ to be small
but $F \setminus sF$ large. For semigroups in general, it is known that
SFC implies left amenability \cite{Argabright67}, which in turn
implies FC \cite{Day57}, but neither of these implications is reversible
\cite{Day57,Klawe77}. (For left cancellative semigroups, of course, it
follows that FC and SFC both exactly characterise amenability.)

\subsection{Left Thick Subsets and Left Reversibility}

Recall that a subset $E$ of a semigroup $S$ is called \textit{left thick} if
for every finite subset $F$ of $S$ there is an element $t$ of $S$ such that
$Ft \subseteq E$. Left thickness is of interest in the theory of
amenability, because it provides an abstract algebraic characterisation of those
subsets capable of having ``full measure'': more precisely, a subset
of a left amenable semigroup $S$ is left thick if and only if it has measure $1$
in some left invariant, finitely additive probability measure $S$ \cite[Proposition~2.1]{Paterson88}.

In any semigroup, it is immediate from the definition that every left ideal (and
hence 
every two-sided ideal) is left thick. Right ideals in a general semigroup
need not be left thick, but in left amenable semigroups it transpires that
they are. Recall that a semigroup is called \textit{left reversible} if every pair
of right ideals (or equivalently, every pair of principal right ideals)
intersects.
Left reversibility is easily seen to be a necessary precondition
for left amenability \cite[Proposition 1.23]{Paterson88}.
Examples of left reversible semigroups include inverse semigroups, commutative semigroups, and cancellative semigroups which embed in groups of left quotients.
The following elementary result connects left reversibility with left thickness:
\begin{proposition}\label{prop_reversiblethick}
For any semigroup $S$, the following conditions are equivalent:
\begin{itemize}
\item[(i)] $S$ is left reversible;
\item[(ii)] every principal right ideal of $S$ is left thick;
\item[(iii)] every right ideal of $S$ is left thick;
\end{itemize}
\end{proposition}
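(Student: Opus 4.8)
The plan is to establish a cycle of implications (iii) $\Rightarrow$ (ii) $\Rightarrow$ (i) $\Rightarrow$ (iii), exploiting the fact that principal right ideals are the building blocks of all right ideals. The implication (iii) $\Rightarrow$ (ii) is immediate, since every principal right ideal is in particular a right ideal. For (ii) $\Rightarrow$ (i), suppose every principal right ideal is left thick, and let $aS^1$ and $bS^1$ be two principal right ideals. Applying left thickness of $aS^1$ to the finite set $F = \{b\}$ (or $F=\{1,b\}$ if we work with $S^1$), we obtain $t \in S$ with $bt \in aS^1$, and since $bt \in bS^1$ as well, the two principal right ideals meet; as intersection of all principal right ideals is equivalent to intersection of all right ideals, $S$ is left reversible.

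The substantive direction is (i) $\Rightarrow$ (iii): assuming $S$ left reversible, I must show every right ideal $R$ is left thick, i.e.\ that for every finite $F = \{f_1,\dots,f_n\} \subseteq S$ there is $t \in S$ with $Ft \subseteq R$. The idea is to build $t$ by a finite induction on $n$, using left reversibility to repeatedly merge the right translates into $R$. Concretely, pick any $r \in R$; then $f_1 r \in f_1 S \subseteq f_1 S^1$, and I want to nudge it into $R$. Left reversibility tells us $f_1 S^1$ and $R$ meet (since $R$, being a right ideal, contains the principal right ideal generated by any of its elements), but to get \emph{all} of $f_1 F' t$ simultaneously inside $R$ for a growing finite set I need to iterate carefully: having found $t_1$ with $f_1 t_1 \in R$, I consider the finite set $\{f_1 t_1, f_2 t_1, \dots\}$ and use that $R$ absorbs right multiplication to push the remaining elements in one at a time, each step invoking that two principal right ideals intersect. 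The cleanest formulation is: by induction on $|F|$, show there exists $t$ with $Ft \subseteq R$; the inductive step takes $t'$ working for $F \setminus \{f_n\}$, notes $f_n t' S^1$ and $R$ intersect so some $f_n t' s \in R$, and then observes $(F \setminus \{f_n\}) t' s \subseteq R s \subseteq R$ since $R$ is a right ideal, so $t = t's$ works for all of $F$.

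The main obstacle — really the only point requiring care — is managing the bookkeeping in the inductive step so that multiplying on the right by the new element $s$ does not eject the previously placed elements from $R$; this is exactly where the right-ideal hypothesis (rather than merely ``left thick'') is used, since $Rs \subseteq R$. One should also be slightly careful about the base case and about whether $F$ is allowed to be empty, and about passing between $S$ and the monoid $S^1$ when forming principal right ideals, but these are routine. No issues of finite additivity or measures arise: the proposition is purely combinatorial, which is why it is labelled elementary.
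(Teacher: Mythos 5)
Your proposal is correct and uses essentially the same argument as the paper: the core step in both is the finite iteration that repeatedly invokes left reversibility and uses the absorption property $Rs \subseteq R$ of right ideals to keep previously placed elements inside, and the (iii)/(ii) $\Rightarrow$ (i) direction is identical. The only difference is cosmetic — you run the cycle in the opposite direction and prove thickness of arbitrary right ideals directly by induction, whereas the paper handles principal right ideals first and then passes to general right ideals via the observation that a superset of a left thick set is left thick.
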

\begin{proof}
Suppose first that (i) holds, and let $aS^1$ be a principal right ideal of
$S$. Let
$\lbrace f_1, \dots, f_n \rbrace$ be a finite subset of $S$. Since $S$ is left
reversible, the right ideal $f_1 S$ intersects the right ideal $aS$, so we may choose $t_1 \in S$ with
$f_1 t_1 \in aS$. Similarly, $f_2 t_1 S$ intersects $aS$, so we may choose
$t_2 \in S$ so that $f_2 t_1 t_2 \in aS$. Continuing in this way, we
define a sequence of elements $t_1, \cdots, t_n \in S$ so that
$f_i t_1 \dots t_i \in aS$ for each $i$. But since $aS$ is a right ideal,
it follows that $f_i t_1 \dots t_n \in aS$ for all $i$. This shows
that $aS$, and hence $a S^1$, is left thick, and hence that (ii) holds.

It is immediate from the definition that a set containing a left thick
set is left thick, so the fact that (ii) implies (iii) follows from the fact
that every right ideal contains a principal right ideal.

Finally, suppose (iii) holds, and let $aS^1$ and $bS^1$ be principal right
ideals. Since $aS^1$ is left thick and $\lbrace b \rbrace$ is a finite set,
there is a $t \in S$ so that $\lbrace b \rbrace t \subseteq aS^1$. But now
$bt \in aS^1 \cap bS^1$, as required to show that (i) holds.
\end{proof}

\subsection{Near Left Cancellativity.} In this section we introduce a
new definition which will prove useful for understanding the relationship
between amenability, F\o lner conditions and cancellativity conditions. 
We say that a semigroup $S$ is \textit{near left cancellative}
if for every element
$s$ of $S$ there is a left thick subset $E$ on which the left translation
map by $s$ restricts to an injective map, that is, such that $sx \neq sy$
whenever $x,y \in E$ with $x \neq y$.

(Near left cancellativity is stronger than that property
informally termed  \textit{almost left cancellativity} in \cite[p.104]{Klawe77},
which is that for every $s$ the set of elements $t$ such
that there is no other element $t'$ with $st = st'$ is left thick. The
term \textit{almost left cancellative} is also used in a completely
different sense in \cite[Section 7.22]{Paterson88}.)

It is immediate that a left cancellative semigroup is near
left cancellative, since the whole semigroup will always be a
left thick subset of itself.
Perhaps a more surprising observation is any semigroup with a
right zero element is near left
cancellative: indeed, the singleton set containing a right zero
is easily seen to be a left thick subset on which the left translation
action of every element of $S$ cannot help but be injective.
This may seem initially troubling to the reader versed in the algebraic
theory of semigroups, where the existence of a zero element is often 
regarded as the antithesis of cancellativity; from this perspective to declare
\textit{everything} with a zero to be close to left cancellative
may seem bizarre. However, when viewing semigroups in a 
\textit{dynamic} context the logic is clearer: intuitively, while a
semigroup with zero may have arbitrary 
algebraic complexity ``above'' the zero, random walks on the semigroup
eventually end at zero with probability $1$, meaning the asymptotic
dynamical behaviour is the same as that of the (left cancellative)
trivial monoid.



The following proposition shows that the class of near left cancellative
semigroups is very large, including for example all left reversible regular
semigroups (and hence all inverse semigroups) and all left reversible
finite semigroups.

\begin{proposition}\label{prop_idptsalc}
Let $S$ be a left reversible semigroup in which every ideal contains an
idempotent (for example, a left reversible regular and/or left reversible
finite semigroup). Then $S$ is near left cancellative.
\end{proposition}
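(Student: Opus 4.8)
The plan is to fix an element $s \in S$ and construct a left thick subset $E$ on which left translation by $s$ is injective. The natural candidate is a set built around a single idempotent: if $e \in S$ is an idempotent with $se = s$, then left translation by $s$ is automatically injective on $eS^1$, since $sx = sy$ with $x = eu$, $y = ev$ would give $sx = s(eu) = (se)u = su$... wait, that reasoning needs $s$ to already "see through" $e$. Let me reorganise. The key point should be: the set $E = \{x \in S^1 : sx = s\}$ — or something close to it — has left translation by $s$ sending everything to the single element $s$, hence injective only if $E$ is a singleton. That is too crude. Instead I would aim for $E$ of the form $fS^1$ where $f$ is a suitably chosen idempotent, and prove injectivity of $s$ on $fS^1$ by first passing to an element that behaves cancellatively.

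Here is the approach I would actually carry out. Given $s \in S$, consider the left ideal $Ss$ (or the ideal $S^1 s S^1$); by hypothesis it contains an idempotent $e$. Then $e = asb$ for suitable $a, b \in S^1$. The crucial observation is that multiplication by an idempotent $e$ on its own "local submonoid" $eSe$ is cancellative-like is false in general, so instead I would use $e$ to \emph{replace} $s$ by a better-behaved element. Concretely, I expect that left translation by $s$ restricted to the set $bS^1$ (where $e = asb$) has the property that $sbu = sbv$ implies $asbu = asbv$, i.e. $eu = ev$; and then since $e$ is idempotent, one shows $eu = ev$ forces equality after further multiplication — but again this needs care. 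Rather than fight this, the cleaner route: let $E$ be a left thick set consisting of elements all lying to the right of the idempotent $e$ (possible because every left ideal, in particular $Se$, is left thick, and by left reversibility every right ideal is left thick by Proposition~\ref{prop_reversiblethick}), chosen so that on $E$ left translation by $s$ factors through an injective partial map. I would take $E = $ the set of elements of the form $te$ with $t$ ranging over a left thick set, use that $e$ is idempotent so $(te)e = te$, and verify $s(te) = s(t'e)$ combined with multiplication on the right by $b$ (from $e = asb$) collapses to a controllable identity.

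More precisely, the heart of the argument: choose an idempotent $e$ in the principal left ideal $S^1 s$, so $e = xs$ for some $x \in S^1$. Then for $u, v \in S^1$, if $s(eu) = s(ev)$ then multiplying on the left by $x$ gives $e(eu) = e(ev)$, i.e. $eu = ev$ (using $e^2 = e$). So left translation by $s$ is injective on $eS^1$ \emph{provided} left translation by $e$ is injective on $eS^1$ — and $e$ acts as the identity on $eS^1$, so it is! Thus $E = eS^1$ works, and $E = eS^1 \supseteq eS$ is a right ideal, hence left thick by Proposition~\ref{prop_reversiblethick} since $S$ is left reversible. This proves $S$ is near left cancellative.

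The main obstacle, and the one step needing genuine care, is the very first claim that there is an idempotent in the \emph{principal} left ideal $S^1 s$ — the hypothesis only guarantees idempotents in \emph{all ideals}, and $S^1 s$ need not be a (two-sided) ideal. The fix is to apply the hypothesis to the two-sided ideal $S^1 s S^1$ to get an idempotent $e = asb$, and then replace $s$ by $as$: one must check that it suffices to prove near left cancellativity "one multiple down", i.e. that if left translation by $as$ is injective on a left thick set then so is left translation by $s$ on a (possibly smaller but still left thick) set — this holds because $s E' \to (as)E'$ type comparisons, together with left thickness being preserved under the relevant operations, let one pull the injectivity back. Verifying this reduction cleanly, and confirming that all the sets produced remain left thick (invoking Proposition~\ref{prop_idptsalc}'s hypotheses and Proposition~\ref{prop_reversiblethick}), is where the real work lies; the idempotent-acts-as-identity trick itself is short.
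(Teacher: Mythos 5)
Your core mechanism is exactly the one the paper uses: given an idempotent $f$ of the form $f = ts$, left translation by $s$ is injective on the right ideal $fS$ because $t(s(fz)) = (ts)(fz) = f^2 z = fz$, and $fS$ is left thick by Proposition~\ref{prop_reversiblethick} since $S$ is left reversible. That part of your argument is correct and complete.

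The genuine gap is the step you yourself flag as ``where the real work lies'': producing an idempotent in $S^1 s$ from the hypothesis, which only yields an idempotent $e = xsy$ in the two-sided ideal $SsS$. Your proposed fix --- replace $s$ by $xs$ and argue ``one multiple down'' --- does not close the circle: the reduction itself is sound (injectivity of $xs$ on a set implies injectivity of $s$ on that set), but to run your key argument on $xs$ you would need an idempotent in the principal \emph{left} ideal $S^1(xs)$, whereas $e = (xs)y$ only exhibits an idempotent in the principal \emph{right} ideal $(xs)S^1$. You are therefore left in exactly the position you started in, with $xs$ in place of $s$. The paper closes this gap with a Green's relations argument: from $xsyxsy = xsy$ one gets $e = xsy \mathrel{\GreenR} xsyxs$, so $xsyxs$ is a regular element, hence $\GreenL$-related to an idempotent $f$, and $f \in S^1(xsyxs) \subseteq S^1 s$ gives $f = ts$ as required. (An elementary alternative, closer in spirit to your sketch, is the standard fact that if $uv$ is idempotent then $(vu)^2$ is idempotent: applying this with $u = xs$ and $v = y$ gives the idempotent $(yxs)^2 = (yxsyx)s \in Ss$ directly.) Without one of these ingredients the proof is incomplete.
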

\begin{proof}
Let $s \in S$, and consider the ideal $SsS$. Since every ideal contains
an idempotent, we may choose an idempotent $e \in SsS$, say $e = xsy$.
Now $xsyxsy = xsy$ implies $e = xsy \GreenR xsyxs$. Thus, $xsyxs$ is
a regular element and so is also $\GreenL$-related to an idempotent, say
$xsyxs \GreenL f = f^2$. In particular, $f$ is $\GreenL$-below $s$, so
we may write $f = ts$ for some $t$.

Now consider the right ideal $fS$. By Proposition~\ref{prop_reversiblethick}
this is left thick. Moreover,
for any element $fz \in fS$ we have $t(s(fz)) = (ts)(fz) = f^2 z = fz$
which tells us that left translation by $s$ is injective on $fS$
as required.
\end{proof}

Examples of semigroups which are \textit{not} near left cancellative
include non-trivial left zero semigroups, or more generally, semigroups
which are right cancellative but not left cancellative. (This claim will
be justified in the remarks following Proposition~\ref{prop_alcklawe} below.)

\subsection{The Klawe Condition and Right Cancellative Quotients}

We say that a semigroup $S$ satisfies the \textit{Klawe condition} if
whenever $s$, $x$ and $y$ in $S$ are such that $sx = sy$, there exists
$t \in S$ so that $xt = yt$. This condition was implicitly introduced and
used, although not given a name, by Klawe \cite{Klawe77}. Every left
cancellative semigroup satisfies
this condition vacuously; in fact, it transpires that
near left cancellativity suffices:
\begin{proposition}\label{prop_alcklawe}
Every near left cancellative semigroup satisfies the Klawe condition.
\end{proposition}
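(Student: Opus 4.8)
The plan is to simply unwind the two definitions and observe that they dovetail immediately, with the defining property of left thick sets doing all the work. Suppose $S$ is near left cancellative, and take $s,x,y \in S$ with $sx = sy$; the goal is to produce $t \in S$ with $xt = yt$.

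First I would invoke near left cancellativity at the element $s$: this provides a left thick subset $E \subseteq S$ on which left translation by $s$ is injective. Next I would apply the definition of left thickness to the \emph{finite} set $\{x,y\}$, obtaining an element $t \in S$ with $\{x,y\}t \subseteq E$, i.e.\ $xt \in E$ and $yt \in E$. Finally I would compute $s(xt) = (sx)t = (sy)t = s(yt)$; since $xt$ and $yt$ both lie in $E$ and left translation by $s$ is injective on $E$, this forces $xt = yt$, which is exactly what the Klawe condition demands.

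I do not expect any real obstacle: the substance of the proposition lies entirely in having isolated the correct definitions, after which the argument is a one-line chase. The single point that warrants a moment's attention is the side on which multiplication occurs --- left thickness is stated with \emph{right} multiplication ($Ft \subseteq E$), and it is precisely this that allows the common left factor $s$ to be preserved when passing from $sx = sy$ to $s(xt) = s(yt)$. For a left- or two-sided variant of the Klawe condition this matching would break down and a genuinely different idea would be needed, but for the stated right-multiplication form, near left cancellativity is tailored exactly to the job.
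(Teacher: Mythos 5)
Your argument is correct and coincides exactly with the paper's own proof: take the left thick set $E$ on which $s$ acts injectively, use left thickness of $E$ applied to the finite set $\{x,y\}$ to find $t$ with $xt, yt \in E$, and conclude $xt = yt$ from $s(xt) = s(yt)$. No gaps; your remark about the right-multiplication convention in left thickness is exactly the point that makes the definitions mesh.
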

\begin{proof}
Let $S$ be near left cancellative. Suppose $sx = sy$. Let $E$ be a left
thick subset of $S$ on which $s$
acts injectively by left translation. Since $E$ is left thick, we may
choose a $t \in S$ such that $\lbrace x,y \rbrace t \subseteq E$. Now
$s(xt) = (sx) t = (sy) t = s(yt)$ where $xt, yt \in E$. But left translation
by $s$ is injective on $E$, so we must have $xt = yt$.
\end{proof}

Note that a right cancellative semigroup satisfying the Klawe condition
must clearly be left cancellative, so Proposition~\ref{prop_alcklawe}
justifies the claim in the previous section that a semigroup which is
right cancellative but not left cancellative cannot be near left
cancellative.

On any semigroup $S$, we may define a binary relation by $x \cong y$ if and
only if there exists an $s$ with $xs = ys$. In general this relation is not
transitive, but in the case $S$ is left reversible it is actually
a congruence, and the quotient $S / \mathord{\cong}$ is a right cancellative
semigroup \cite[Proposition~1.24]{Paterson88}. (Note that left reversibility
is not a \textit{necessary} condition for the relation to be a congruence or
the quotient to be right cancellative, as witnessed for example by a free
semigroup of rank $2$.)

We shall need the following elementary fact, one implication of which
was shown by Klawe \cite{Klawe77}.
\begin{proposition}\label{prop_klawecanc}
Let $S$ be a left reversible semigroup. Then $S$ satisfies the Klawe
condition if and only if $S / \mathord{\cong}$ is left cancellative.
\end{proposition}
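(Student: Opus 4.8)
The plan is to work entirely in the quotient $S/\mathord{\cong}$, writing $[x]$ for the $\cong$-class of $x\in S$. Since $S$ is left reversible, we already know that $\cong$ is a congruence and that $S/\mathord{\cong}$ is right cancellative \cite[Proposition~1.24]{Paterson88}; thus $[x][y]=[xy]$ for all $x,y\in S$, and $[x]=[y]$ holds precisely when $xs=ys$ for some $s\in S$. With this dictionary, both the Klawe condition and left cancellativity of $S/\mathord{\cong}$ become assertions about when an equation $[s][x]=[s][y]$ may be cancelled, and the two directions of the equivalence are essentially bookkeeping.

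For the direction that $S/\mathord{\cong}$ left cancellative implies the Klawe condition, I would argue as follows. Suppose $sx=sy$ in $S$. Then $[s][x]=[sx]=[sy]=[s][y]$, so left cancellativity in $S/\mathord{\cong}$ gives $[x]=[y]$; by the description of $\cong$ this means $xt=yt$ for some $t\in S$, which is exactly the Klawe condition. Here left reversibility is used only to ensure that $\cong$ is a congruence, so that the products $[s][x]$ and $[s][y]$ are well defined.

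For the converse, suppose $S$ satisfies the Klawe condition, and suppose $[s][x]=[s][y]$ in $S/\mathord{\cong}$. Unwinding, $sx\cong sy$, so there is $u\in S$ with $(sx)u=(sy)u$, that is, $s(xu)=s(yu)$. Applying the Klawe condition to this equation produces $t\in S$ with $(xu)t=(yu)t$, hence $xu\cong yu$, i.e. $[x][u]=[xu]=[yu]=[y][u]$. Now right cancellativity of $S/\mathord{\cong}$ lets us cancel $[u]$ on the right to obtain $[x]=[y]$, so $S/\mathord{\cong}$ is left cancellative.

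I do not expect a genuine obstacle here: once the congruence property and right cancellativity of the quotient are in hand, the proof is a few lines of manipulation. The only point worth flagging is the asymmetry this reveals: the implication ``Klawe condition $\Rightarrow$ $S/\mathord{\cong}$ left cancellative'' genuinely consumes the right cancellativity of $S/\mathord{\cong}$, and hence left reversibility, whereas the reverse implication uses left reversibility only to make sense of the quotient — consistent with the remark in the excerpt that $\cong$ can be a congruence with right cancellative quotient even without left reversibility, as for the free semigroup of rank $2$.
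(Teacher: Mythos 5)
Your proof is correct, and both directions check out. The direction ``$S/\mathord{\cong}$ left cancellative $\Rightarrow$ Klawe condition'' is verbatim the argument the paper gives. The difference is in the other direction: the paper does not prove ``Klawe condition $\Rightarrow$ $S/\mathord{\cong}$ left cancellative'' at all, but simply cites it as the contrapositive of Lemma~2.1 of Klawe \cite{Klawe77}, whereas you supply a short self-contained argument (pass from $[s][x]=[s][y]$ to $s(xu)=s(yu)$, apply the Klawe condition to get $(xu)t=(yu)t$, and cancel $[u]$ on the right using right cancellativity of the quotient). This buys a proof that does not depend on the external reference, at the cost of invoking right cancellativity of $S/\mathord{\cong}$ --- though that is essentially free, since it follows immediately from the definition of $\cong$ once one knows $\cong$ is a congruence, so your closing remark slightly overstates the asymmetry: both directions really only need left reversibility to guarantee that the quotient is well defined, which is consistent with the paper's own comment to that effect.
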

\begin{proof}
The direct implication is the contrapositive of \cite[Lemma~2.1]{Klawe77}.

For the converse, suppose $S / \mathord{\cong}$ is left cancellative, and that
$sx = sy$. Then writing $[a]$ for the $\cong$-equivalence class of
$a$, we have $[s][x] = [s][y]$, so by left cancellativity of the quotient
we must have $[x] = [y]$, that is, $x \cong y$. But by definition this means that
there is a $t$ with $xt = yt$, as required.
\end{proof}
Notice that the left reversibility hypothesis is required only to ensure
that $\cong$ is a congruence so that $S / \mathord{\cong}$ is actually well-defined. 
The Klawe condition does not suffice to ensure left reversibility (consider
for example a free semigroup of rank $2$, or any cancellative semigroup
which does not embed in a group) or indeed even for $\cong$ to be transitive.
For example, the semigroup
$\langle a, b, c, x, y \mid ax = bx, by = cy \rangle$
is left cancellative, and hence trivially satisfies the Klawe condition,
but we have $a \cong b$ and $b \cong c$ but $a \not\cong c$.

Klawe \cite[Theorem~2.2]{Klawe77} showed that a left amenable semigroup satisfies
SFC if and only if the right cancellative quotient $S  / \cong$
is also left cancellative. Combining this with Proposition~\ref{prop_klawecanc}
yields a three-way equivalence:

\begin{theorem}\label{thm_klawe}
Let $S$ be a left amenable semigroup. Then the following are equivalent:
\begin{itemize}
\item[(i)] $S$ satisfies the strong F\o lner condition;
\item[(ii)] the right cancellative quotient $S / \mathord{\cong}$ is left cancellative;
\item[(iii)] $S$ satisfies the Klawe condition.
\end{itemize}
\end{theorem}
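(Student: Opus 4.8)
The plan is to obtain Theorem~\ref{thm_klawe} as an almost immediate consequence of the three ingredients already assembled in the section: Klawe's result \cite[Theorem~2.2]{Klawe77} giving the equivalence of (i) and (ii), Proposition~\ref{prop_klawecanc} giving the equivalence of (ii) and (iii), and the observation that a left amenable semigroup is in particular left reversible \cite[Proposition~1.23]{Paterson88}, which is exactly the hypothesis needed to invoke Proposition~\ref{prop_klawecanc}. So the first step is to note that, since $S$ is left amenable, it is left reversible, and hence the relation $\cong$ is a congruence and $S/\mathord{\cong}$ is a well-defined right cancellative semigroup; this makes statements (ii) and (iii) meaningful.

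Next I would establish (i) $\Leftrightarrow$ (ii) by quoting \cite[Theorem~2.2]{Klawe77} directly: for a left amenable semigroup, SFC holds precisely when the right cancellative quotient $S/\mathord{\cong}$ is also left cancellative. Then I would establish (ii) $\Leftrightarrow$ (iii) by applying Proposition~\ref{prop_klawecanc}, whose hypothesis (left reversibility) is satisfied as noted in the first step; that proposition says $S$ satisfies the Klawe condition if and only if $S/\mathord{\cong}$ is left cancellative. Chaining these two biconditionals gives the full three-way equivalence.

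There is essentially no obstacle here: the theorem is a packaging result, and the only thing to be careful about is making explicit that left amenability is being used twice --- once directly in Klawe's theorem for the (i) $\Leftrightarrow$ (ii) step, and once (via its consequence, left reversibility) to legitimise the quotient and apply Proposition~\ref{prop_klawecanc} for the (ii) $\Leftrightarrow$ (iii) step. The proof I would write is therefore just two or three sentences: invoke left reversibility, cite Klawe for (i)$\Leftrightarrow$(ii), and cite Proposition~\ref{prop_klawecanc} for (ii)$\Leftrightarrow$(iii).

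\begin{proof}
Since $S$ is left amenable it is left reversible \cite[Proposition~1.23]{Paterson88}, so by \cite[Proposition~1.24]{Paterson88} the relation $\cong$ is a congruence and $S/\mathord{\cong}$ is a well-defined right cancellative semigroup; in particular conditions (ii) and (iii) make sense. The equivalence of (i) and (ii) is \cite[Theorem~2.2]{Klawe77}. The equivalence of (ii) and (iii) follows from Proposition~\ref{prop_klawecanc}, applied using the left reversibility of $S$ just established.
\end{proof}
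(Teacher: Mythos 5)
Your proposal is correct and matches the paper's own treatment exactly: the paper derives the theorem by combining Klawe's \cite[Theorem~2.2]{Klawe77} for (i)$\Leftrightarrow$(ii) with Proposition~\ref{prop_klawecanc} for (ii)$\Leftrightarrow$(iii), with left reversibility (a consequence of left amenability) guaranteeing that $S/\mathord{\cong}$ is well defined. Your explicit remark that left amenability is used twice is a helpful clarification that the paper leaves implicit.
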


Combining with a result of Argabright and Wilde \cite{Argabright67} we obtain the fact that
for the (very large) class of semigroups satisfying the Klawe condition, the strong F\o lner condition
gives an exact characterisation of amenability.

\begin{theorem}\label{thm_klawesfc}
Let $S$ be a semigroup satisfying the Klawe condition. Then $S$ is left amenable if and only
if $S$ satisfies SFC.
\end{theorem}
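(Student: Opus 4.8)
The plan is to obtain this theorem as an essentially immediate consequence of Theorem~\ref{thm_klawe} together with the implication of Argabright and Wilde \cite{Argabright67} that SFC implies left amenability in an arbitrary semigroup. The point is that all the real work has already been done: Theorem~\ref{thm_klawe} packages Klawe's analysis and Proposition~\ref{prop_klawecanc} into a clean three-way equivalence, and we simply need to feed the Klawe hypothesis into it.

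First I would treat the forward implication. Assume $S$ satisfies the Klawe condition and is left amenable. Then the hypothesis of Theorem~\ref{thm_klawe} is met, so conditions (i), (ii) and (iii) of that theorem are equivalent for $S$; since condition (iii) holds by assumption, condition (i) --- that is, SFC --- holds as well. For the converse, assume $S$ satisfies SFC. Here the Klawe hypothesis is not needed at all: by \cite{Argabright67}, SFC implies left amenability in \emph{any} semigroup, so $S$ is left amenable.

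There is essentially no obstacle to overcome; the only point requiring care is the asymmetry in how Theorem~\ref{thm_klawe} is used. That theorem presupposes left amenability, so it can legitimately be invoked only in the forward direction (where left amenability is part of our hypothesis); in the converse direction one must instead appeal directly to the Argabright--Wilde result. Once this is noted, the proof is a two-line deduction, and I would write it as such.

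It may also be worth remarking, immediately after the proof, that combining Theorem~\ref{thm_klawesfc} with Propositions~\ref{prop_alcklawe}, \ref{prop_idptsalc} and \ref{prop_reversiblethick} yields the broad-class consequences advertised in the introduction (all near left cancellative semigroups, and in particular all left reversible semigroups in which every ideal contains an idempotent, have the property that left amenability coincides with SFC), but that is bookkeeping rather than part of the proof of this statement itself.
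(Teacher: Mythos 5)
Your proposal is correct and is essentially identical to the paper's own proof: the forward direction follows from Theorem~\ref{thm_klawe} (invoked under the left amenability hypothesis), and the converse is exactly \cite[Theorem~1]{Argabright67}. Your remark about the asymmetry --- that Theorem~\ref{thm_klawe} presupposes left amenability and so can only be used in one direction --- is the right point of care and matches how the paper structures the deduction.
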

\begin{proof}
One implication is immediate from Theorem~\ref{thm_klawe}; the other is
\cite[Theorem~1]{Argabright67}.
\end{proof}

\begin{corollary}\label{cor_idptsfc}
Let $S$ be a semigroup in which every ideal contains an idempotent (for example a regular semigroup,
inverse semigroup, finite semigroup, compact left or right topological semigroup, or semigroup with a left, right or two-sided zero). Then $S$ is left amenable if and only if $S$ satisfies SFC.
Moreover, if $S$ is left amenable then for every $s \in S$ there is a left invariant
finitely additive probability measure on $S$
such that the left  translation map by $s$ is injective when restricted to some set of full measure.
\end{corollary}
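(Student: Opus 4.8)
The plan is to reduce the corollary to Theorem~\ref{thm_klawesfc} together with the results on near left cancellativity proved earlier in the section. The first observation is that the hypothesis ``every ideal contains an idempotent'' is inherited by all the listed examples, and more importantly that it interacts well with the dichotomy between left reversible and non-left-reversible semigroups. So I would split into two cases according to whether $S$ is left reversible.

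\emph{Case 1: $S$ is left reversible.} Then $S$ is a left reversible semigroup in which every ideal contains an idempotent, so Proposition~\ref{prop_idptsalc} applies to give that $S$ is near left cancellative. By Proposition~\ref{prop_alcklawe} a near left cancellative semigroup satisfies the Klawe condition, so Theorem~\ref{thm_klawesfc} tells us that $S$ is left amenable if and only if $S$ satisfies SFC. \emph{Case 2: $S$ is not left reversible.} Then, since left reversibility is a necessary condition for left amenability (\cite[Proposition 1.23]{Paterson88}, recalled in the text), $S$ is not left amenable; it remains to see that $S$ does not satisfy SFC either, which follows because SFC implies left amenability by the result of Argabright and Wilde \cite{Argabright67} already quoted. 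Hence in this case both sides of the claimed equivalence are false, and the equivalence holds trivially. Combining the two cases gives the first assertion.

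For the ``moreover'' clause, assume $S$ is left amenable. As just argued, $S$ is then left reversible, and by Case~1 it is near left cancellative; fix $s \in S$ and let $E$ be a left thick subset of $S$ on which left translation by $s$ is injective, as guaranteed by the definition of near left cancellativity. By the characterisation of left thick subsets of a left amenable semigroup as precisely those sets of measure $1$ in \emph{some} left invariant finitely additive probability measure (\cite[Proposition~2.1]{Paterson88}, recalled in the text), there is a left invariant finitely additive probability measure $\mu$ on $S$ with $\mu(E) = 1$. This $\mu$ is the required measure: left translation by $s$ is injective on the full-measure set $E$.

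The only mildly delicate point is making sure the case split is clean --- in particular that Proposition~\ref{prop_idptsalc} really does cover all the listed classes once left reversibility is assumed (regular, inverse, finite, and zero-containing semigroups all have the property that every ideal contains an idempotent, and compact left or right topological semigroups do too by the standard Ellis-type argument producing idempotents in closed subsemigroups), and that the non-left-reversible case is disposed of by the two one-directional implications SFC $\Rightarrow$ left amenable $\Rightarrow$ left reversible rather than by any appeal to Theorem~\ref{thm_klawesfc} (whose Klawe-condition hypothesis we have not verified in that case). I do not expect any genuine obstacle; the content is entirely in the earlier propositions.
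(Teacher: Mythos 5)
Your proposal is correct and takes essentially the same route as the paper: the forward implication via left amenable $\Rightarrow$ left reversible $\Rightarrow$ near left cancellative (Proposition~\ref{prop_idptsalc}) $\Rightarrow$ Klawe condition (Proposition~\ref{prop_alcklawe}) $\Rightarrow$ SFC (Theorem~\ref{thm_klawesfc}), the converse by Argabright--Wilde, and the ``moreover'' clause by choosing a measure giving full weight to the left thick set on which translation by $s$ is injective. Your case split on left reversibility is just a reorganisation of the paper's two one-directional implications, and your care about not invoking Theorem~\ref{thm_klawesfc} in the non-reversible case matches what the paper implicitly does.
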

\begin{proof}
If $S$ is left amenable then in particular it is left reversible, so we deduce by Proposition~\ref{prop_idptsalc} that $S$ is near left cancellative, by Proposition~\ref{prop_alcklawe} that $S$ satisfies
the Klawe condition and so by Theorem~\ref{thm_klawesfc} that $S$ satisfies
SFC. The converse is again \cite[Theorem~1]{Argabright67}.

Moreover, if $s \in S$ then since $S$ is near left cancellative, there
is a left thick subset $E$ of $S$ such that the left translation map of $s$
on $E$ is injective. But since $E$ is left thick, by \cite[Proposition~1.21]{Paterson88},
there exists a left invariant finitely additive probability measure on $S$ such that $E$ has full measure.
\end{proof}

Of course, it is not the case that every left thick subset of
a left amenable semigroup can be made to have full measure simultaneously,
with respect to the same measure; consider for example a non-trivial finite
right
zero semigroup, which is left amenable but has disjoint left thick subsets.
However, with reference to the latter part of Corollary~\ref{cor_idptsfc},
one may ask whether
it is necessary for the measure to be chosen differently for different
translation maps, or if a single measure suffices:
\begin{question}
If a semigroup is left amenable and near left cancellative, is there
necessarily 
a left invariant finitely additive probability measure such that every
element acts injectively by
left translation on some set of full measure?
\end{question}


\section{A new characterisation of SFC}\label{sec_newsfc}

In this section we present a new characterisation of SFC, in terms of the existence of \textit{weak} F\o lner sets satisfying a local injectivity condition on the relevant translation action of the semigroup. As well as having potential applications as a technical lemma for studying SFC, this result is conceptually interesting, as it gives a new insight into what exactly it is about left cancellativity which is important for amenability.
The proof is a straightforward direct argument.

\begin{theorem}
Let $S$ be a semigroup. Then $S$ satisfies the strong F\o lner condition
if and only if for every finite set $F \subseteq S$ and $\epsilon > 0$ there is a finite set $X \subseteq S$ such that
for each $f \in F$ we have
$$|fX \setminus X| \leq \epsilon |X|$$ 
and for all $x,y \in X$, if $fx = fy$ then $x = y$.
\end{theorem}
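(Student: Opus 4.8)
The plan is to prove the two implications separately; the reverse implication (the stated condition $\Rightarrow$ SFC) is essentially a one-line observation, and all the work goes into the forward implication.

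For the reverse direction, suppose the condition holds. Given a finite $F\subseteq S$ and $\epsilon>0$, take the set $X$ supplied by the hypothesis, so that $|fX\setminus X|\le\epsilon|X|$ and left translation by $f$ is injective on $X$, for every $f\in F$. Injectivity gives $|fX|=|X|$, and for any finite sets $A,B$ one has $|A\setminus B|-|B\setminus A|=|A|-|B|$; applying this with $A=fX$ and $B=X$ yields $|X\setminus fX|=|fX\setminus X|\le\epsilon|X|$. Hence $X$ witnesses SFC for $(F,\epsilon)$, and this direction is complete.

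For the forward direction, assume $S$ satisfies SFC; fix a finite $F\subseteq S$ (the case $F=\emptyset$ being trivial via any singleton) and $\epsilon>0$. First I would use SFC to choose a finite $Y\subseteq S$ with $|Y\setminus fY|\le\delta|Y|$ for all $f\in F$, where $\delta>0$ is a small constant, depending only on $|F|$ and $\epsilon$, to be fixed at the end. For each $f\in F$ let $B_f=\{y\in Y: fy=fy' \text{ for some } y'\in Y\setminus\{y\}\}$ be the set of points where left translation by $f$ fails to be injective on $Y$, and set $X=Y\setminus\bigcup_{f\in F}B_f$; then $f$ is injective on $X$ for every $f\in F$ by construction. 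The key estimate is the fibre count $|B_f|\le 2|Y\setminus fY|$: grouping $Y$ by images under $f$, with fibre sizes $n_z=|f^{-1}(z)\cap Y|$, one has $|B_f|=\sum_{n_z\ge 2}n_z\le 2\sum_{n_z\ge 2}(n_z-1)\le 2(|Y|-|fY|)\le 2|Y\setminus fY|\le 2\delta|Y|$. Consequently $|Y\setminus X|\le 2|F|\delta|Y|$ and $|X|\ge(1-2|F|\delta)|Y|$. To bound $|fX\setminus X|$, note again that injectivity of $f$ on $X$ gives $|fX\setminus X|=|X\setminus fX|$, and $X\setminus fX\subseteq(X\setminus fY)\cup(fY\setminus fX)$ with $X\setminus fY\subseteq Y\setminus fY$ and $fY\setminus fX\subseteq f(Y\setminus X)$; hence $|fX\setminus X|\le|Y\setminus fY|+|Y\setminus X|\le(1+2|F|)\delta|Y|$. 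Dividing, $|fX\setminus X|/|X|\le(1+2|F|)\delta/(1-2|F|\delta)$, so choosing $\delta=\min\{1/(4|F|),\,\epsilon/(2(2|F|+1))\}$ makes this at most $\epsilon$ (and keeps $|X|>0$), finishing the proof.

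The main obstacle — indeed essentially the only non-routine point — is the fibre-counting inequality $|B_f|\le 2|Y\setminus fY|$, which is exactly what permits trimming an SFC set down to a set on which all of $F$ acts injectively while discarding only an $O(|F|\delta)$ fraction of the elements. Once that is in hand, the rest is bookkeeping with inclusions of finite sets, and the converse direction reduces to the identity $|fX\setminus X|=|X\setminus fX|$ for injective $f$.
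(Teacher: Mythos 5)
Your proof is correct and follows essentially the same route as the paper's: the reverse direction is identical, and in the forward direction the paper likewise takes an SFC set, deletes the elements lying in non-singleton fibres of some $f\in F$, bounds the deleted portion by $2|F|\mu|A|$, and arrives at the same final ratio $(1+2|F|)\mu/(1-2|F|\mu)$. The only cosmetic difference is that you establish the fibre-count bound directly by summing fibre sizes, whereas the paper obtains it by contradiction from the resulting shrinkage of $|fA|$.
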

\begin{proof}
Suppose the given condition is satisfied, and given $F$ and $\epsilon$ choose $X$ as in the condition. Then for each $f \in F$, the injectivity of the action of $f$ on $X$ implies that $|fX| = |X|$, whence
$$|X \setminus fX| \ = \ |fX \setminus X| \ \leq \epsilon |X|$$
as required to show that SFC holds.

Conversely, suppose $S$ satisfies SFC and let $F$ and $\epsilon$ be given. Let $\mu > 0$ be
small. By SFC we can choose a finite set $A$ such that $|fA \setminus A| < \mu |A|$. Define 
$$B \ = \ \lbrace a \in A \mid \lnot \left( \exists f \in F, b \in A, b \neq a, fb = fa \right) \rbrace$$
to be the set of all elements of $A$ which form singleton fibres under the action of left translation by
each element of $F$.
Clearly by definition, elements of $F$ acts injectively by left translation on $B$. We claim that
\begin{equation}\label{eq_claim1}
|A \setminus B| \ \leq \ 2 |F| \mu |A|
\end{equation}
Indeed, clearly we have
$$A \setminus B = \bigcup_{f \in F} \lbrace a \in A \mid \exists b \in A, b \neq a, fa = fb \rbrace.$$
We write $C_f$ for the component of the union on the right-hand side corresponding to $f \in F$.
Now if \eqref{eq_claim1} does not hold then there must be some $f \in F$ such that $|C_f| > 2 \mu |A|$.
By the definition of $C_f$, it is clear that $|f C_f| \leq \frac{1}{2} |C_f|$ so we have
\begin{align*}
|fA| &\leq |f C_f| + |f (A \setminus C_f)| \ \\
&\leq \ \frac{1}{2} |C_f| + |A \setminus C_f| \\
&= \ |A| - \frac{1}{2} |C_f| \\
&< \ |A| - \mu |A| \\
&= \ (1- \mu) |A|.
\end{align*}
But this means that
$|A \setminus fA| > \mu |A|$, contradicting the choice of $A$. This completes the proof of
\eqref{eq_claim1}. 

It follows also from \eqref{eq_claim1} that
\begin{equation}\label{eq_claim2}
|B| \ \geq \ |A| - 2|F| \mu |A| \ = \ (1-2|F| \mu) |A|
\end{equation}

Now for any $s \in F$ we have
\begin{align*}
|B \setminus sB| \ &\leq \ |A \setminus sB| &\textrm{(since $B \subseteq A$)} \\
&\leq \ |A \setminus sA| + |sA \setminus sB| \\
&\leq \ |sA \setminus A| + |A \setminus B| \\
&\leq \ \mu |A| + 2 |F| \mu |A| & (\textrm{by the definition of $A$ and } \eqref{eq_claim1})\\
&= \ (1 + 2|F|) \mu |A| \\
&\leq \ \frac{(1 + 2|F|) \mu}{1-2|F| \mu} |B| &(\textrm{by } \eqref{eq_claim2})
\end{align*}
By choosing $\mu > 0$ sufficiently small we may make
$$\frac{(1+2|F|) \mu}{1-2|F| \mu} \ < \ \epsilon$$
(since the left hand-side as a function of $\mu$ takes the value $0$ at
$\mu = 0$ and is clearly continuous away from $\mu = 1/(2|F|)$, so that
$$|B \setminus sB| \ \leq \ \epsilon |B|$$
as required.
\end{proof}

\section{Growth and Amenability}

Our aim in this section is to show that for finitely generated semigroups
satisfying the Klawe condition, sub-exponential growth is a sufficient
condition for SFC, and hence for left amenability.

Recall that if $M$ is a semigroup generated by a finite subset $X$, the
\textit{growth function} of $M$ with respect to $X$ is the function which
maps a natural number $n$ to the number of distinct elements of $M$ which can
be written as a product of $n$ or fewer generators from $X$. Although the
growth function of $M$ depends on the choice of finite generating set, its
asymptotic behaviour does not, and is an invariant of the monoid. We say
that $M$ has \textit{polynomial growth} if its growth function is bounded
above by a polynomial, or \textit{subexponential growth} if its growth
function is eventually bounded above by every increasing exponential function.
Growth of finitely generated semigroups is a major topic in both abstract semigroup
theory and application areas --- see for example \cite{Cedo07,Grigorchuk88,Shneerson08}
for work in this area.

We shall need the following lemma about semigroups satisfying the Klawe
condition.
\begin{lemma}
Let $a$ and $b$ be elements of a semigroup $S$ satisfying the Klawe
condition. Then either $aS$ and $bS$ intersect, or $a$ and $b$ freely
generate a free subsemigroup of rank $2$.
\end{lemma}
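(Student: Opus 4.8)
The plan is to show the contrapositive-flavoured dichotomy directly: assuming that $aS$ and $bS$ do not intersect, I will prove that the subsemigroup generated by $a$ and $b$ is free of rank $2$ on $\{a,b\}$. So suppose we have two words $w = c_1 c_2 \cdots c_m$ and $w' = d_1 d_2 \cdots d_n$ with each $c_i, d_j \in \{a,b\}$, and that they represent the same element of $S$. The goal is to show $m = n$ and $c_i = d_i$ for all $i$. I would argue by induction on $\min(m,n)$, with the crucial first step being to show that $c_1 = d_1$.

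To see that $c_1 = d_1$: suppose not, so without loss of generality $c_1 = a$ and $d_1 = b$. If $m = 1$ then $a = b d_2 \cdots d_n \in bS$, and also $a = a \cdot (\text{anything})$-free, but more simply $a \in aS^1$; to force a contradiction with $aS \cap bS = \emptyset$ I should be slightly careful about the $S$-versus-$S^1$ issue. Concretely, if $m \geq 2$ then $a c_2 \cdots c_m \in aS \cap bS$, contradiction; and if $m = 1$, then multiply the equation $a = b d_2 \cdots d_n$ on the right by $a$ to get $a^2 = b d_2 \cdots d_n a \in aS \cap bS$, again a contradiction (similarly if $n = 1$). Hence in all cases $c_1 = d_1$; call this common first letter $c$, so $c \cdot (c_2 \cdots c_m) = c \cdot (d_2 \cdots d_n)$ where one of these tails may be empty. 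If, say, $m = 1$, then $c = c(d_2\cdots d_n)$, and multiplying on the right by the tail repeatedly (or invoking the Klawe condition — see below) we would derive that $b$-and-$a$-containing words collide in a way that again puts something in $aS \cap bS$; I expect to handle the ``one word is a prefix of the other'' case separately and show it forces $aS \cap bS \neq \emptyset$ unless $m = n$.

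Once we know $c_1 = d_1 = c$ and (in the main case) both tails are nonempty, we have $c x = c y$ where $x = c_2 \cdots c_m$ and $y = d_2 \cdots d_n$. Here is where the Klawe condition enters: it gives $t \in S$ with $xt = yt$, i.e. $c_2 \cdots c_m t = d_2 \cdots d_n t$. This does not immediately let me cancel $c$ and induct on the words themselves, so instead I would track the assertion ``$c_2 \cdots c_m$ and $d_2 \cdots d_n$ become equal after right multiplication by some element of $S$''. Feeding this into an auxiliary claim — that if $u = u_1 \cdots u_p$ and $v = v_1 \cdots v_q$ are $\{a,b\}$-words with $u t = v t$ for some $t \in S$ while $aS \cap bS = \emptyset$, then $p = q$ and $u_i = v_i$ for all $i$ — which itself is proved by the same first-letter argument (now using $u_1 s = v_1 s$ where $s = u_2\cdots u_p t$, and noting $u_1 \cdot(\text{suffix})\cdot t \in u_1 S$), closes the induction. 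The main obstacle, as this sketch suggests, is bookkeeping the empty-tail / prefix cases and making sure the relevant products genuinely lie in $aS$ (not merely $aS^1$) so that the hypothesis $aS \cap bS = \emptyset$ can be applied; the Klawe condition does the real algebraic work of propagating equalities past the non-cancellable leading letter, but it only yields equality ``after right multiplication'', so the inductive statement must be phrased in those terms throughout.
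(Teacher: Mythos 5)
Your overall strategy coincides with the paper's: assume $aS\cap bS=\emptyset$, take two distinct words in $a$ and $b$ with the same value in $S$, use the Klawe condition to ``cancel'' their common prefix at the cost of a right multiplier, and observe that once the words diverge at their leading remaining letters the common value lands in $aS\cap bS$. Your first-letter argument is sound (including the device of right-multiplying by $a$ when a word has length $1$), and your auxiliary claim --- tracking equality only up to right multiplication by some $t\in S$ --- is the right way to propagate the Klawe condition; carrying the multiplier $t$ even removes the $aS$-versus-$aS^1$ worry in all later steps. A minor simplification: you need not peel the common prefix letter by letter, since the Klawe condition applies in one shot with $s$ equal to the value of the entire common prefix.

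There is, however, a genuine gap at exactly the point you flag and defer: the case where one word is a proper prefix of the other (equivalently, where one tail empties before the other inside your induction). There your process terminates with an equation of the form $s=\bar z\, s$, where $\bar z$ is the value in $S$ of a nonempty word $z$ in $a$ and $b$. This only shows that $s$ lies in $z_1S$ for the single letter $z_1$ beginning $z$; neither ``multiplying by the tail repeatedly'' (which merely yields $s=\bar z^{\,k}s$ for all $k$) nor a further application of the Klawe condition produces an element of $aS\cap bS$, so no contradiction follows and the induction does not close. The missing idea is to eliminate the prefix case \emph{before} peeling: if $u$ is a proper prefix of $v$, append to both words a letter $c$ chosen to differ from the letter of $v$ immediately following the prefix $u$. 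Then $uc$ and $vc$ still represent the same element, neither is a prefix of the other, and they first diverge with distinct letters, so your main argument applies verbatim. This appending trick is precisely how the paper handles that case, and your proof is incomplete without it or some equivalent device.
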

\begin{proof}
Consider the natural map
$f : \lbrace A,B \rbrace^+ \to S$ from the free semigroup on symbols
$A$ and $B$ to $S$, taking $A$ to $a$ and $B$ to $b$. If $a$ and $b$
do not freely generate a free subsemigroup then this map cannot be injective,
so we may choose distinct words $u,v \in \lbrace A,B \rbrace^+$ such
that $f(u)=f(v)$.

We may assume without loss of generality that
neither $u$ nor
$v$ is a prefix of the other. Indeed, if $u$ is a prefix of $v$, then choose
$c \in \lbrace A,B \rbrace$ such that $c$ is not the letter of $v$
immediately following the prefix $u$. Then clearly we have $f(uc) = f(vc)$
where neither $uc$ nor $vc$ is a prefix of the other, so we may simply replace
$u$ by $uc$ and $v$ by $vc$. A symmetric argument applies if $v$ is a prefix of
$u$.

Now let $w$ be the longest common prefix of $u$ and $v$, and write
$u = wu'$ and $v = wv'$. The assumption from the previous paragraph
ensures that $u'$ and $v'$ are non-empty. If $w$ is non-empty then $f(w) f(u') = f(w) f(v')$ in $S$, so
by the Klawe condition we may choose $s \in S$ such that $f(u') s = f(v') s$. If $w$ is
empty then $f(u') = f(v')$ so we may choose $s \in S$ arbitrarily to obtain the same property.
But by construction, $u'$ and $v'$ begin with different letters from
$\lbrace A,B \rbrace$, so the element $f(u')s = f(v')s$ lies in both $aS$ and $bS$.
\end{proof}

An immediate corollary is a useful fact about
semigroups satisfying the Klawe condition:
\begin{corollary}\label{cor_klawedichotomy}
Let $S$ be a semigroup satisfying the Klawe condition. Then either $S$
is left reversible or $S$ contains a free subsemigroup of rank $2$.
\end{corollary}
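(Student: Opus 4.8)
The plan is to obtain this as an immediate consequence of the preceding lemma, essentially by contraposition. Suppose $S$ is not left reversible. Then, using the equivalent formulation of left reversibility in terms of principal right ideals, I would choose elements $a, b \in S$ for which $aS^1 \cap bS^1 = \emptyset$. In particular $aS \cap bS \subseteq aS^1 \cap bS^1 = \emptyset$, so $aS$ and $bS$ do not intersect. Now apply the lemma to this pair $a, b$: since $S$ satisfies the Klawe condition and the first alternative of the lemma (that $aS$ and $bS$ intersect) has been ruled out, the second alternative must hold, namely that $a$ and $b$ freely generate a free subsemigroup of rank $2$. Hence $S$ contains a free subsemigroup of rank $2$, as required.

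There is essentially no obstacle here; the lemma does all the work. The only point worth spelling out is the passage from disjointness of the principal right ideals $aS^1, bS^1$ to disjointness of the right ideals $aS, bS$, which is immediate from $aS \subseteq aS^1$ and $bS \subseteq bS^1$. Equivalently, one can run the argument in the positive direction: if $S$ contains no free subsemigroup of rank $2$, then for every pair $a, b \in S$ the lemma forces $aS \cap bS \neq \emptyset$, and since $aS \cap bS \subseteq aS^1 \cap bS^1$ this shows that every pair of principal right ideals meets, so $S$ is left reversible by the equivalent characterisation of left reversibility.

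Finally, I would remark that this dichotomy is exactly the "Tits-alternative"-style statement needed later: a finitely generated semigroid satisfying the Klawe condition which fails to be left reversible contains a free subsemigroup of rank $2$ and hence has exponential growth, which is the contrapositive of the implication (subexponential growth $\Rightarrow$ left reversible) feeding into the proof that subexponential growth implies SFC for such semigroups.
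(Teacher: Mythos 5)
Your proof is correct and is essentially the paper's argument: the paper states the corollary as an immediate consequence of the preceding lemma, and your contrapositive (non-left-reversibility gives $a,b$ with $aS^1\cap bS^1=\emptyset$, hence $aS\cap bS=\emptyset$, hence by the lemma a free subsemigroup of rank $2$) is exactly the routine deduction being left implicit. The passage between $aS^1$ and $aS$ that you spell out is indeed the only point needing any care, and you handle it correctly.
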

Note that the two possibilities in Corollary~\ref{cor_klawedichotomy}
are not mutually exclusive: for example, adjoining a zero element to a
free semigroup of rank $2$ yields a semigroup which satisfies both
conditions (and also the Klawe condition).

Our next result says that all finitely generated semigroups of
subexponential growth satisfy the weak condition FC. In itself this
is not especially interesting --- indeed, it might be thought of as
simply indicating just how weak a condition FC is --- but combined
with other results it will allow us to establish sufficient conditions
for SFC.

\begin{theorem}\label{thm_polyfc}
Let $S$ be a finitely generated semigroup of subexponential growth. Then
$S$ satisfies the F\o lner condition FC.
\end{theorem}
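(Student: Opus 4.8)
The plan is to prove this by a counting/averaging argument of the classical F\o lner type, adapted from the group case. Fix a finite generating set $X$ for $S$; without loss of generality assume the finite set $H$ in the definition of FC is contained in $X$ (enlarging $X$ if necessary, since enlarging the test set only makes FC harder). For each $n$, let $B_n$ denote the ball of radius $n$, i.e. the set of elements expressible as a product of at most $n$ generators, and let $\beta(n) = |B_n|$ be the growth function. The key observation is that for any $s \in X$ we have $s B_n \subseteq B_{n+1}$, so $|s B_n \setminus B_n| \leq |B_{n+1} \setminus B_n| = \beta(n+1) - \beta(n)$. Thus if we could find, for each $\epsilon > 0$, some $n$ with $\beta(n+1) - \beta(n) \leq \epsilon \beta(n)$, taking $F = B_n$ would finish the proof.

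So the heart of the matter is the elementary fact that subexponential growth forces $\beta(n+1)/\beta(n)$ to dip below $1+\epsilon$ infinitely often — indeed, $\liminf_n \beta(n+1)/\beta(n) = 1$. I would prove the contrapositive: if there were $\epsilon > 0$ and $N$ with $\beta(n+1) > (1+\epsilon)\beta(n)$ for \emph{all} $n \geq N$, then $\beta(N + k) > (1+\epsilon)^k \beta(N)$ for all $k$, so $\beta$ grows at least exponentially, contradicting subexponential growth. (One should be slightly careful that $\beta$ is nondecreasing and that, since $S$ is finitely generated and presumably infinite — if $S$ is finite it is trivially amenable and satisfies FC with $F = S$ — the ratios are well-defined; the monotone sequence $\beta(n+1)/\beta(n)$ cannot be bounded below by $1+\epsilon$ eventually.) This gives infinitely many $n$ with $\beta(n+1) \leq (1+\epsilon)\beta(n)$, equivalently $\beta(n+1) - \beta(n) \leq \epsilon\beta(n)$, and combined with the containment $sB_n \setminus B_n \subseteq B_{n+1}\setminus B_n$ we get $|sB_n \setminus B_n| \leq \epsilon|B_n|$ simultaneously for every $s \in X \supseteq H$.

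The main (and really only) obstacle is purely a matter of care rather than depth: one must handle the finite-semigroup case separately (trivial), ensure the growth function is strictly increasing until it stabilises so that the $\liminf$ argument is valid, and correctly pass from the ratio bound to the FC inequality with the \emph{same} $F$ working for all of $H$ at once — which is automatic here because $F = B_n$ does not depend on $s$. I would present it in this order: (1) reduce to $S$ infinite and $H \subseteq X$; (2) establish $sB_n \subseteq B_{n+1}$ and hence $|sB_n \setminus B_n| \leq \beta(n+1) - \beta(n)$; (3) prove the growth-ratio lemma via the exponential-lower-bound contrapositive; (4) pick a suitable $n$ and conclude. It is worth remarking that this argument never uses the Klawe condition or left reversibility — reflecting the authors' comment that FC on its own is a very weak property — and that the same proof shows any finitely generated semigroup of subexponential growth has FC regardless of cancellativity.
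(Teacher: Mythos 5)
Your proposal is correct and follows essentially the same route as the paper: take $F = B_n$ for a ball whose growth ratio $|B_{n+1}|/|B_n|$ is below $1+\epsilon$, with the existence of such an $n$ established by the same exponential-lower-bound contradiction (the paper phrases it as $\inf_i |B_{i+1}|/|B_i| = 1$, you as a $\liminf$/infinitely-often statement, which is an immaterial difference). The side issues you flag (finite $S$, monotonicity of $\beta$) are harmless and in fact need no separate treatment, since the infimum argument already covers them.
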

\begin{proof}
Given a finite subset $F$ of $S$, we may choose a finite generating
set $X$ for $S$ containing $F$. Let $B_i$ denote the
ball of radius $i$ in $S$ with respect to the generating set $X$, that
is, the set of all elements which can be written as a product of $i$ or
fewer generators from $X$. We
claim that
$$\inf_{i \in \mathbb{N}} \frac{|B_{i+1}|}{|B_i|} = 1.$$
Indeed, because $B_{i} \subseteq B_{i+1}$ for each $i$ the given
infimum is certainly at least $1$, so if the claim were false it would
be bounded below by some $\lambda > 1$. But then we have 
$|B_{i+1}| / |B_i| > \lambda$ for all $i$ and certainly $|B_1| > 0$, so
$|B_i| \geq |B_1| \lambda^{i-1}$ for all $i$,
contradicting the subexponential growth of $S$.

Hence, given $\epsilon > 0$, we may choose $i$ with $|B_{i+1}| / |B_i| < 1+\epsilon$.
For any $f \in F$ we have $f \in X$ so $f B_i \subseteq B_{i+1}$ and now
$$|f B_i \setminus B_i| \ \leq \ |B_{i+1} \setminus B_i| \ \leq \ \epsilon |B_i|$$
as required to show that $S$ satisfies FC.
\end{proof}

Since the F\o lner condition suffices for left amenability within the class
of left cancellative semigroups, it follows immediately from
Theorem~\ref{thm_polyfc} that left cancellative semigroups
of subexponential growth are left amenable. But in fact, by combining with
previously known results, we are in a position to prove something much
stronger:

\begin{theorem}\label{thm_polyklawe}
Let $S$ be a finitely generated semigroup of subexponential growth and
satisfying the Klawe condition. Then $S$ is left amenable and satisfies
the strong F\o lner condition.
\end{theorem}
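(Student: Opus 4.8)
The plan is to assemble the result from three ingredients already in hand. First, by Theorem~\ref{thm_polyfc}, a finitely generated semigroup of subexponential growth satisfies FC; in particular it is not too wild. Second, I want to leverage Corollary~\ref{cor_klawedichotomy}: since $S$ satisfies the Klawe condition, it is either left reversible or contains a free subsemigroup of rank $2$. The key observation is that a free subsemigroup of rank $2$ has exponential growth, and a subsemigroup of a finitely generated semigroup of subexponential growth cannot have exponential growth (one should note that growth of a sub-semigroup is at most that of the ambient semigroup with respect to a generating set containing the free generators --- or more carefully, the free subsemigroup on $\{a,b\}$ embeds in the ball of radius $n$ of $S$ all $2^n$ distinct words of length $n$ in $a,b$, forcing exponential growth of $S$). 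Hence the second alternative is impossible, and $S$ must be left reversible.

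Now that $S$ is left reversible and satisfies the Klawe condition, Proposition~\ref{prop_klawecanc} tells us that the right cancellative quotient $S/\mathord{\cong}$ is left cancellative, so it is cancellative. I would then want to show $S/\mathord{\cong}$ is left amenable: being a quotient of a finitely generated semigroup of subexponential growth, it is itself finitely generated of subexponential growth (growth does not increase under quotients), hence by Theorem~\ref{thm_polyfc} it satisfies FC, and being left cancellative, FC implies left amenability. Next I need to pull amenability of $S/\mathord{\cong}$ back up to $S$. The relevant fact is that left amenability passes to a semigroup from a quotient by a left-reversible-type congruence --- more precisely, the congruence $\cong$ has the property that each class, together with the structure, behaves well enough that a left invariant mean on $S/\mathord{\cong}$ lifts to one on $S$; alternatively one can argue directly that $S$ left reversible plus $S/\mathord{\cong}$ left amenable gives $S$ left amenable, which is essentially the content of how Klawe handles these quotients (see \cite[Section~2]{Klawe77} and \cite[Proposition~1.24]{Paterson88}). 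With $S$ left amenable established, Theorem~\ref{thm_klawesfc} (or Theorem~\ref{thm_klawe}) immediately upgrades this to SFC, since $S$ satisfies the Klawe condition.

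So the steps in order are: (1) apply Theorem~\ref{thm_polyfc} and Corollary~\ref{cor_klawedichotomy}, ruling out the free subsemigroup of rank $2$ on growth grounds to conclude $S$ is left reversible; (2) form $S/\mathord{\cong}$, note via Proposition~\ref{prop_klawecanc} that it is left cancellative, and that it inherits finite generation and subexponential growth, hence is left amenable by FC $=$ amenability for left cancellative semigroups; (3) lift left amenability from $S/\mathord{\cong}$ to $S$ using left reversibility; (4) conclude SFC via Theorem~\ref{thm_klawesfc}. I expect the main obstacle to be step (3): making precise and correctly justified the claim that a left reversible semigroup whose right cancellative quotient is left amenable is itself left amenable. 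This requires care because amenability is not in general inherited from quotients, and one must use the specific structure of the congruence $\cong$ (its classes are ``$\cong$-classes'' that are asymptotically negligible in the appropriate measure-theoretic sense) --- this is precisely where Klawe's machinery enters, and I would cite \cite[Theorem~2.2]{Klawe77} together with the observation that the Klawe condition plus left reversibility is exactly the hypothesis needed. An alternative route for step (3), avoiding lifting, would be to invoke Theorem~\ref{thm_klawesfc} directly once one knows $S$ is left amenable --- but that is circular, so the genuine work is establishing left amenability of $S$, and on reflection the cleanest path may be to quote Klawe's result that a left reversible semigroup satisfying the Klawe condition is left amenable if and only if its (left cancellative) right cancellative quotient is.
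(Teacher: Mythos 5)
Your steps (1) and (2) coincide exactly with the paper's argument: rule out a free subsemigroup of rank $2$ on growth grounds to obtain left reversibility via Corollary~\ref{cor_klawedichotomy}, then pass to $S' = S/\mathord{\cong}$, which is left cancellative by Proposition~\ref{prop_klawecanc} and finitely generated of subexponential growth, hence satisfies FC by Theorem~\ref{thm_polyfc} and therefore SFC (equivalently, is left amenable). The problem is your step (3), which you yourself identify as the crux but do not actually close. Neither of the references you offer does the job: Klawe's Theorem~2.2 in \cite{Klawe77} takes left amenability of $S$ as a \emph{hypothesis} (it says a left amenable semigroup satisfies SFC if and only if $S/\mathord{\cong}$ is left cancellative), so invoking it to establish left amenability of $S$ is circular in exactly the way you worry about for Theorem~\ref{thm_klawesfc}; and \cite[Proposition~1.24]{Paterson88} only gives that the quotient is well defined and right cancellative. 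Left amenability does not in general lift from quotients, so a genuine citation or argument is required here, not a gesture at ``Klawe's machinery''.

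The gap is fillable, in two ways. The paper's route is to transfer the \emph{F\o lner condition} rather than amenability: by \cite[Theorems 1 and 5]{Argabright67}, SFC for the quotient $S/\mathord{\cong}$ yields that $S$ itself satisfies SFC and is left amenable --- no lifting of invariant means is needed. Alternatively, the amenability transfer you want (for left reversible $S$, left amenability of $S/\mathord{\cong}$ implies left amenability of $S$) is essentially \cite[Proposition~1.25]{Paterson88}, whose contrapositive the paper itself uses in the discussion following its Conjecture; with that citation your step (3) goes through, and step (4) then correctly upgrades left amenability to SFC via Theorem~\ref{thm_klawesfc}. As written, though, the decisive step of your argument rests on a misattributed and circular reference, so the proof is incomplete.
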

\begin{proof}
Since $S$ has subexponential growth it cannot have free subsemigroups of
rank greater than $1$, so Corollary~\ref{cor_klawedichotomy} tells us
that $S$ is left reversible. Thus, the quotient semigroup $S' = S / \mathord{\cong}$
is well-defined and right cancellative by \cite[Proposition~1.24]{Paterson88},
and left cancellative by Proposition~\ref{prop_klawecanc}.

It is immediate that the quotient $S'$ is finitely generated with growth
function bounded above by that of $S$ (hence in particular subexponential).
Hence, by Theorem~\ref{thm_polyfc}, $S'$
satisfies FC. But since $S'$ is left cancellative,
this means $S'$ it satisfies SFC, which by \cite[Theorems 1 and 5]{Argabright67}
means that $S$ is left amenable and satisfies SFC.
\end{proof}



We also recover from Theorem~\ref{thm_polyklawe} a new elementary proof of the
well-known fact that commutative semigroups are amenable (or equivalently,
satisfy SFC).

\begin{corollary}[{\cite[Theorem~4]{Argabright67}}]
All commutative semigroups are amenable.
\end{corollary}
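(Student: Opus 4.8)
The plan is to reduce to the finitely generated case and then quote Theorem~\ref{thm_polyklawe}. Since left and right translations coincide in a commutative semigroup, left amenability, right amenability and (two-sided) amenability all coincide there; moreover, as recorded above, SFC implies left amenability by \cite{Argabright67}. So it suffices to prove that every commutative semigroup satisfies SFC.

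First I would note that SFC can be verified one finitely generated subsemigroup at a time. Let $S$ be commutative, let $H \subseteq S$ be finite and $\epsilon > 0$. Put $T = \langle H \rangle$, the subsemigroup of $S$ generated by $H$; it is finitely generated and commutative. If $T$ satisfies SFC, then applied to the finite set $H \subseteq T$ it yields a finite set $F \subseteq T \subseteq S$ with $|F \setminus sF| \le \epsilon|F|$ for every $s \in H$, where the product $sF$ may be computed in $T$ or in $S$ --- it is the same set. Thus $F$ witnesses SFC for $H$ and $\epsilon$ in $S$, and so it is enough to show that every finitely generated commutative semigroup satisfies SFC.

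Now let $S$ be commutative and generated by a finite set $\{x_1, \dots, x_k\}$. Any product of at most $n$ generators can be rewritten, using commutativity, as $x_1^{a_1} \cdots x_k^{a_k}$ with $a_1 + \dots + a_k \le n$, so the growth function of $S$ is bounded above by the number $\binom{n+k}{k}$ of such exponent tuples, which is polynomial in $n$; hence $S$ has polynomial, and in particular subexponential, growth. Also, $S$ satisfies the Klawe condition: if $sx = sy$ then taking $t = s$ gives $xt = xs = sx = sy = ys = yt$. Theorem~\ref{thm_polyklawe} therefore applies, and $S$ is left amenable and satisfies SFC. Combining this with the previous paragraph, every commutative semigroup satisfies SFC and is hence amenable.

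I do not expect any serious obstacle here: the only things requiring care are the elementary polynomial growth bound for finitely generated commutative semigroups and the one-line verification of the Klawe condition, with all the real substance already packaged into Theorem~\ref{thm_polyklawe} (and, behind it, Corollary~\ref{cor_klawedichotomy}, Proposition~\ref{prop_klawecanc} and \cite{Argabright67}).
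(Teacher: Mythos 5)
Your proof is correct, and its core --- verifying the Klawe condition and polynomial growth for finitely generated commutative semigroups and then invoking Theorem~\ref{thm_polyklawe} --- is exactly the paper's. The only divergence is in the reduction to the finitely generated case. The paper passes from finitely generated subsemigroups to $S$ at the level of \emph{amenability}, citing the fact that a semigroup all of whose finitely generated subsemigroups are left amenable is itself left amenable \cite[Problem 0.30]{Paterson88}; this is a genuine (if standard) analytic fact about invariant means. You instead pass to $S$ at the level of \emph{SFC}, observing that the witness set $F$ for a given finite $H$ and $\epsilon>0$ can always be sought inside $\langle H \rangle$, which is elementary and immediate from the definition since $sF$ is the same set whether computed in $\langle H \rangle$ or in $S$; you then convert SFC into left amenability via \cite[Theorem~1]{Argabright67}. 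Your route is slightly more self-contained (the local-to-global step costs nothing) and yields the marginally stronger conclusion that every commutative semigroup satisfies SFC, at the price of one extra appeal to Argabright--Wilde, which the paper relies on elsewhere in any case. Both arguments are sound, and both finish by noting that left and right amenability coincide for commutative semigroups.
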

\begin{proof}
Commutative semigroups trivially satisfy the
Klawe condition, and finitely generated ones clearly have
growth functions bounded above by a polynomial of degree the 
number of generators. Hence, given a commutative semigroup $S$, Theorem~\ref{thm_polyklawe}
tells us that all of its finitely generated subsemigroups are
left amenable, which by \cite[Problem 0.30]{Paterson88} suffices for
$S$ to be left amenable.

Finally, right amenability is easily observed to be trivially equivalent
to left amenability in the commutative case.
\end{proof}

We have shown that subexponential growth suffices for left amenability
in a large class of finitely generated semigroups. We know it does not
suffice for semigroups in absolute generality, because not all finite
semigroups are left amenable \cite[Corollary 1.19]{Paterson88}. We do not know if lack of
left reversibility is the only possible obstruction here, that is, whether
a left reversible, finitely generated semigroup of subexponential growth
is necessarily left amenable.  We conjecture that it is not, even if
subexponential growth is replaced by the stronger condition of polynomial
growth:

\begin{conjecture}
There is a left reversible, finitely generated semigroup of polynomial
growth which is not left amenable.
\end{conjecture}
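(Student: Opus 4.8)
The plan is to construct an explicit semigroup with the stated properties, and the first step is to pin down what any such semigroup must look like. Since polynomial growth is in particular subexponential, Theorem~\ref{thm_polyklawe} forces any counterexample to \emph{fail} the Klawe condition; by Proposition~\ref{prop_alcklawe} it is therefore not near left cancellative, hence not left cancellative, and by Proposition~\ref{prop_idptsalc} it can contain no ideal meeting the idempotents. So the search is confined to genuinely pathological semigroups lying outside every class covered by Corollary~\ref{cor_idptsfc}. Moreover FC holds here automatically by Theorem~\ref{thm_polyfc}, so the failure of left amenability cannot be witnessed by weak F\o lner sets at all: it must be hidden in the discrepancy between FC and SFC, that is, in left translations which collapse fibres and are injective on no left thick subset. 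It is natural to look first for a \emph{right cancellative} example: the $\cong$-quotient of any counterexample is right cancellative and of no larger growth (though it need not stay non-left-amenable), and in a right cancellative semigroup the only mechanism available for non-amenability is precisely the non-injectivity of left translation, stripped of all other structure. One should also keep in mind that this conjecture is asserting something genuinely delicate: in the cancellative setting polynomial growth already implies left amenability outright (left cancellative plus subexponential growth gives left amenability, as noted after Theorem~\ref{thm_polyfc}), so cancellativity is really being used there and the claim is that it cannot be relaxed to left reversibility.

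The construction strategy I would pursue is to \emph{simulate} a non-amenable group inside a semigroup of polynomial growth. Fix a non-amenable finitely generated group $G$, say $F_2$, acting on an (infinite) set $\Omega$ with no $G$-invariant finitely additive probability measure --- for instance $G$ acting on itself by left translation --- and realise the desired semigroup $S$ as a finitely generated semigroup of transformations of $\Omega$ together with a small amount of auxiliary structure. The generators of $S$ would be chosen to act as the generators of $G$ composed with drastic ``folding'' maps, engineered so that the transformations actually arising form a polynomially growing set, while a suitable quotient of the left translation action of $S$ still carries the non-amenable $G$-dynamics. A ``sink'' element or minimal ideal, through which all sufficiently long products pass, would be built in to force every pair of principal right ideals to meet, yielding left reversibility; one must check that this sink does not by itself manufacture a left invariant mean, the way a two-sided zero would. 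A finitely generated left amenable, non-near-left-cancellative semigroup of polynomial growth (a ``polynomial-growth Takahashi-type'' example), if one exists, could serve as a useful scaffold on top of which the non-amenable dynamics are layered.

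The step I expect to be the main obstacle --- and the reason the conjecture remains open --- is verifying non-left-amenability of the explicit example, and more fundamentally reconciling it with polynomial growth. A left invariant finitely additive probability measure on $S$ ought to be pushed through the folding maps to produce an invariant mean for the simulated $G$-action, contradicting non-amenability of $G$; but because the folding maps are highly non-injective there is no clean pushforward, and the naive argument collapses. The real tension is that polynomial growth forces the semigroup to be ``small'', and every natural way of making it small enough re-introduces structure --- a retraction onto an amenable quotient, an invariant mean averaging over a minimal ideal, or outright cancellativity triggering the subexponential-growth amenability result above --- that restores left amenability, whereas every way of keeping the full non-amenable dynamics pushes the growth back up to exponential. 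Threading between these, so that the left translation action has polynomially bounded image yet admits no invariant mean while right ideals still pairwise intersect, is exactly the hard part, and I would expect most of the effort to go precisely there.
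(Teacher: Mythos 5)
The statement you are addressing is a \emph{conjecture}: the paper offers no proof of it, and neither do you. Your text is a (mostly sensible) heuristic discussion of what a counterexample would have to look like, followed by a construction strategy that you yourself concede breaks down at the decisive step, namely verifying non-left-amenability of the candidate while keeping polynomial growth. That concession is the whole ballgame: without an explicit semigroup and a verification of all three properties (left reversible, polynomial growth, not left amenable), nothing has been established. So this cannot be assessed as a correct proof taking a different route; it is an unproved existence claim, exactly as in the paper.

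Two specific points where your preliminary analysis goes astray relative to the paper's own remarks. First, from Proposition~\ref{prop_idptsalc} the correct conclusion is only that \emph{some} ideal of the counterexample contains no idempotent; your phrasing (``can contain no ideal meeting the idempotents'') asserts the much stronger statement that every ideal is idempotent-free, which does not follow. Second, and more importantly, you write that the quotient $S/\mathord{\cong}$ ``need not stay non-left-amenable.'' The paper's remark following the conjecture asserts the opposite, citing \cite[Proposition~1.25]{Paterson88}: for a left reversible $S$, non-left-amenability \emph{does} pass to $S/\mathord{\cong}$, and this is precisely what reduces the conjecture to the case of semigroups that are right cancellative but not left cancellative (the left cancellative case being excluded via Proposition~\ref{prop_klawecanc} and Theorem~\ref{thm_polyklawe}). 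Getting this reduction right is the one concrete piece of progress the paper records on the conjecture, and your version of it is weaker than what is actually available.
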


Just as for the Sorenson conjecture, the question (for both polynomial
and subexponential cases) reduces to the case of semigroups which are
right cancellative but not left cancellative. Indeed, if $S$ is an
example as postulated by the conjecture (or a corresponding example with
subexponential in place of polynomial growth) then $S' = S / \mathord{\cong}$ is well defined
and right cancellative by \cite[Proposition~1.24]{Paterson88}. Moreover,
$S'$ is finitely generated, of polynomial (or subexponential) growth, left
reversible (since this property is clearly inherited by quotients) and
also not left amenable \cite[Proposition~1.25]{Paterson88}. Finally, $S'$ is not left cancellative,
since if it were then by Proposition~\ref{prop_klawecanc} $S$ would 
satisfy the Klawe condition, but then by Theorem~\ref{thm_polyklawe} $S$
would be left amenable, giving a contradiction.


\section{Quasi-Isometry Invariance of F\o lner Conditions}

In this section we explore the extent to which amenability and the
F\o lner conditions are ``geometric'' properties of finitely generated
semigroups, in the sense of being invariant under left and/or right
quasi-isometry.

\subsection{Digraphs, Quasi-Isometry and Semigroups}
If $\Gamma$ is a digraph we write $V\Gamma$ for its vertex
set and $E \Gamma$ for its edge set, which we view as a subset of
$V\Gamma \times V \Gamma$. We write $P_f(X)$ for the set of finite
subsets of a set $X$. We write $\mathbb{\overline{R}}$ for the set of
non-negative real numbers with $\infty$ adjoined, and extend multiplication,
addition and the usual order on $\mathbb{R}$ to $\mathbb{\overline{R}}$
in the obvious way, leaving $0 \infty$ undefined.

We briefly recall the definition of quasi-isometry for digraphs
and semigroups --- see \cite{K_semimetric} for a detailed introduction.
Given a digraph $\Gamma$, we define a \textit{semimetric}
$d_\Gamma : V\Gamma \times V\Gamma \to \mathbb{\overline{R}}$
by setting $d_\Gamma(x,y)$ to be the length of the shortest directed path
from $x$ to $y$ in $\Gamma$,
or $\infty$ if there is no such path. We say that two digraphs $\Gamma$ and
$\Delta$ are 
\textit{quasi-isometric} if there is a map $\phi : V\Gamma \to V \Delta$
and a real number $\lambda > 0$ such that for all vertices $x$ and $y$ of $\Gamma$ we have
$$\frac{1}{\lambda} d_\Delta(\phi(x),\phi(y)) - \lambda \ \leq \ d_\Gamma(x,y) \ \leq \ \lambda d_\Delta(\phi(x),\phi(y)) + \lambda,$$
and for every vertex $y$ of $\Delta$ there is a vertex $x$ of $\Gamma$
with $d_\Delta(\phi(x),y) \leq \lambda$ and $d_\Delta(y, \phi(x)) \leq \lambda$. The map
$\phi$ is called a \textit{$\lambda$-quasi-isometry}. Quasi-isometry gives
rise to an equivalence relation on the class of all digraphs
\cite[Proposition~1]{K_semimetric}.

If $S$ is a semigroup with a finite generating set $X$, the
\textit{right} [respectively, \textit{left}] \textit{Cayley graph of $S$ with respect
to $X$}, denoted $\Gamma_r(S,X)$ [respectively, $\Gamma_l(S,X)$], is the digraph
with vertex set $S$ and an edge from $s$ to $t$ if and only if $sx = t$
[respectively, $xs = t$]
for some $x \in X$. Although the right (or left) Cayley graph depends on the choice of
generators, its quasi-isometry class does not \cite[Proposition~4]{K_semimetric};
thus we may say that two finitely generated semigroups are
\textit{right} [\textit{left}] \textit{quasi-isometric} if their right [left]
Cayley graphs are quasi-isometric, without concerning ourselves about
choice of generators.

Unlike in a group, the left and right Cayley graphs of a semigroup need not
be isomorphic,
or indeed even quasi-isometric. The left and right quasi-isometry classes of a semigroup thus
form two distinct natural invariants which in some
sense capture its ``large-scale geometry'', and it is natural to ask
what properties of a finitely generated semigroup can be seen in either
or both of these quasi-isometry classes.
Our first observation is that the key property of left reversibility
can be seen in one of the quasi-isometry classes (although not the other):
\begin{proposition}\label{prop_leftrevqsi}
Left reversibility is a right (but not a left) quasi-isometry invariant
of semigroups.
\end{proposition}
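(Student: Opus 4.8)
The plan is to prove this in two parts: first that left reversibility is a right quasi-isometry invariant, and second that it is not a left quasi-isometry invariant (so a counterexample is needed for the second).

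\medskip

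\textbf{Right quasi-isometry invariance.} First I would reformulate left reversibility in purely geometric terms of the right Cayley graph $\Gamma_r(S,X)$. Recall that the right Cayley graph has an edge $s \to t$ whenever $sx = t$ for some generator $x$, so the set of vertices reachable from $s$ by directed paths is precisely $s S^1$, the principal right ideal generated by $s$ (together with $s$ itself). Thus $S$ is left reversible if and only if for every pair of vertices $s, t$ of $\Gamma_r(S,X)$ there is a vertex $u$ reachable (by a directed path) from both $s$ and $t$; equivalently, any two ``forward cones'' in $\Gamma_r(S,X)$ intersect. Next I would observe that this cone-intersection property is preserved under quasi-isometry of digraphs: if $\phi : V\Gamma \to V\Delta$ is a $\lambda$-quasi-isometry and forward cones in $\Gamma$ always intersect, then given vertices $a, b$ of $\Delta$, I pull them back (up to bounded error) to vertices $a', b'$ of $\Gamma$ using the quasi-surjectivity, find a common forward target $u'$ in $\Gamma$, and push forward: $\phi(u')$ is then within bounded distance of being forward-reachable from both $a$ and $b$. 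The minor technical point is that ``forward-reachable up to bounded distance'' must be upgraded to genuine forward-reachability; this works because once a vertex $v$ of $\Delta$ lies within bounded \emph{directed} distance of a common target, one can extend by the finitely many short paths --- but care is needed since the quasi-isometry inequality controls $d_\Delta(\phi(x),\phi(y))$ in terms of $d_\Gamma(x,y)$, and directed distances can be infinite. I expect the cleanest route is to argue directly: $d_\Gamma(a', u') < \infty$ and $d_\Gamma(b', u') < \infty$ force $d_\Delta(\phi(a'), \phi(u'))$ and $d_\Delta(\phi(b'), \phi(u'))$ to be finite by the right-hand quasi-isometry inequality, and then $d_\Delta(a, \phi(u')) \le d_\Delta(a,\phi(a')) + d_\Delta(\phi(a'),\phi(u')) < \infty$ (using quasi-surjectivity to bound $d_\Delta(a, \phi(a'))$), so $\phi(u')$ really is forward-reachable from both $a$ and $b$. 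Hence left reversibility passes between right-quasi-isometric semigroups.

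\medskip

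\textbf{Failure of left quasi-isometry invariance.} For the negative part I need two finitely generated semigroups that are left quasi-isometric but with only one of them left reversible. The natural candidates are a free semigroup $F$ of rank $2$ (which is \emph{not} left reversible, since distinct generators generate disjoint right ideals) and the semigroup $F^0 = F \cup \{0\}$ obtained by adjoining a zero (which \emph{is} left reversible, since $0$ lies in every right ideal). The point is that the left Cayley graph $\Gamma_l$ records edges $s \to xs$, i.e. left multiplication by generators, and adjoining a zero only adds the single vertex $0$ with edges $0 \to x0 = 0$ (loops) and edges $s \to 0$ only when some generator $x$ satisfies $xs = 0$ --- but in $F^0$ we never have $xs = 0$ for $s \in F$, so in fact $\Gamma_l(F^0, X)$ is just $\Gamma_l(F, X)$ together with an isolated vertex $0$ carrying loops. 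An isolated added vertex is at infinite distance from everything, which would break quasi-isometry, so I would instead take the generating set $X \cup \{0\}$ for $F^0$: then $0$ acquires edges $0 \cdot s = 0$, i.e. every vertex has an edge to $0$ and $0$ has a loop, but still no vertex of $F$ is closer by virtue of $0$ to anything else. With this generating set $d_{\Gamma_l(F^0, X\cup\{0\})}$ restricted to $F$ coincides with $d_{\Gamma_l(F,X)}$ (adding the generator $0$ to the left-multiplication alphabet never helps move between elements of $F$), and the inclusion $F \hookrightarrow F^0$ is then a quasi-isometry (it is distance-preserving on $F$, and $0$ is within distance $1$, via a loop and via the bound, but actually one must check: $d(s, 0) = 1$ for all $s$, and $d(0,0)=1$ by the loop or $0$ by convention --- so $0$ is within bounded distance, fine). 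Hence $F$ and $F^0$ are left quasi-isometric, $F^0$ is left reversible, $F$ is not, completing the proof.

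\medskip

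\textbf{Main obstacle.} The genuinely delicate point is the second part: making sure the chosen semigroups really are left quasi-isometric. Adjoining a zero is the standard trick for forcing left reversibility while changing the algebra as little as possible, but because the left Cayley graph is built from left multiplication, and a zero is a right zero for that action, one must be slightly careful about generating sets so that the new vertex $0$ is at bounded directed distance both to and from the rest (it is reachable from everything in one step, and the loop keeps it at distance $0$ or $1$ from itself, but nothing else is reachable \emph{from} $0$ except $0$ --- which is fine since quasi-surjectivity and the distance bounds only need to be checked for the specified map $F \hookrightarrow F^0$, not its inverse). Everything else is routine; the forward-cone reformulation in the first part is conceptually the key idea and the rest is bookkeeping with the quasi-isometry inequalities, taking care that directed distances may be $\infty$.
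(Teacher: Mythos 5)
Your first half is essentially sound: reformulating left reversibility as ``any two forward cones in the right Cayley graph intersect'' and pushing this through a quasi-isometry is a correct, self-contained argument (the paper instead quotes a result that the containment order on principal right ideals is a right quasi-isometry invariant, which encodes the same reachability data). One small slip: finiteness of $d_\Delta(\phi(a'),\phi(u'))$ follows from the \emph{left}-hand quasi-isometry inequality $\tfrac{1}{\lambda}d_\Delta(\phi(x),\phi(y))-\lambda \le d_\Gamma(x,y)$, not the right-hand one; but the substance is fine.

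The second half, however, has a genuine gap: the free semigroup $F$ of rank $2$ and $F^0 = F\cup\{0\}$ are \emph{not} left quasi-isometric, so your counterexample fails. In $\Gamma_l(F^0, X\cup\{0\})$ the vertex $0$ has only loops as out-edges, so $d(0,v)=\infty$ for every $v\in F$. The definition of quasi-isometry used in the paper requires, for every vertex $y$ of the target, a vertex $x$ of the source with \emph{both} $d_\Delta(\phi(x),y)\le\lambda$ and $d_\Delta(y,\phi(x))\le\lambda$; for $y=0$ the second condition demands a short directed path \emph{from} $0$ into $\phi(F)\subseteq F$, which does not exist. Your parenthetical claim that ``quasi-surjectivity \dots\ only need[s] to be checked for the specified map, not its inverse'' is exactly where the argument breaks: the two-sided distance requirement at $y$ is part of the definition of the map being a quasi-isometry, not a statement about an inverse map. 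Nor can this be repaired by choosing a different map: any quasi-isometry $\phi\colon\Gamma_l(F)\to\Gamma_l(F^0)$ would have to send some $x_0$ to $0$ (to satisfy quasi-surjectivity at $0$), and then the whole forward cone of $x_0$ must map to $0$, forcing $\phi(ax_0)=\phi(bx_0)=0$ while $d(ax_0,bx_0)=\infty$ in $\Gamma_l(F)$, contradicting $d_\Gamma(x,y)\le \lambda d_\Delta(\phi(x),\phi(y))+\lambda$; a symmetric argument rules out a quasi-isometry in the other direction. The paper's counterexample avoids all of this by using finite semigroups: the $2$-element left zero semigroup (not left reversible, since $eL=\{e\}$ and $fL=\{f\}$ are disjoint) has left Cayley graph the complete digraph on two vertices, which is isomorphic to the left Cayley graph of $\mathbb{Z}_2$ (left reversible). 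You should replace your counterexample with one of this kind.
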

\begin{proof}
From the definition it follows immediately that a semigroup is left
reversible if and only if every pair of principal right ideals contain
a common principal right ideal. Thus, left reversibility can be seen
in the structure of the containment order on the set of principal right
ideals. By \cite[Proposition 5]{K_semimetric}, this order is a right
quasi-isometry invariant of semigroups.

On the other hand, the $2$-element left-zero semigroup (with both elements
as generators) is not left reversible, but its left Cayley graph is a
complete digraph on 2 vertices, so isomorphic (and in particular
quasi-isometric) to that of $\mathbb{Z}_2$, which is left reversible,
with both elements as generators.
\end{proof}

\subsection{Isoperimetric Numbers of Digraphs} Let $\Gamma$ be a digraph.
For any subset $A$ of $V\Gamma$ we define the \textit{out-boundary}
$\partial A$ of $A$ to be the set
\[
\partial A = 
\{
x \in V\Gamma : \exists (y,x) \in ED \ \mbox{with} \  y \in A \ \textrm{ and } \ x \notin A
\}. 
\]
Now we define the (\textit{outward}) \textit{isoperimetric number} of
$\iota(\Gamma)$ of $\Gamma$ by:
\[
\iota(\Gamma) = \inf_{A \in P_f(V\Gamma)}\left(  \frac{|\partial A|}{|A|} \right).
\]
We shall be interested in digraphs with isoperimetric number zero, that is,
graphs $\Gamma$ for which
\begin{equation}
(\forall \epsilon > 0) \; (\exists A \in \mathcal{P}_f(V\Gamma)) \; : \; \frac{|\partial A|}{|A|} < \epsilon. 
\label{eq_star}
\end{equation}
%
%
%
%
%
We note that the notions of outboundary and isoperimetric number of a digraph defined here arise in the literature in the definitions of Cheeger constants and Cheeger inequalities as part of the spectral theory of directed graphs; see \cite{Fan2005}.

The following result shows the relationship between FC and isoperimetric number. 
\begin{proposition}\label{prop_isop}
Let $S$ be a semigroup generated by a finite set $X$. Then $S$ satisfies
FC if and only if the left Cayley graph
$\Gamma_l(S,X)$ has isoperimetric number zero. 
\end{proposition}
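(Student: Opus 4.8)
The plan is to unwind both the F\o lner condition and the isoperimetric number into a common combinatorial statement about finite subsets of $S$, so that the two conditions become literally the same quantifier statement. The key observation is that in the left Cayley graph $\Gamma_l(S,X)$, an edge runs from $s$ to $t$ precisely when $t = xs$ for some generator $x \in X$; hence for a finite set $A \subseteq S$ the out-boundary $\partial A$ is exactly $\bigl(\bigcup_{x \in X} xA\bigr) \setminus A = \bigcup_{x \in X}(xA \setminus A)$. So $|\partial A| \le \sum_{x \in X} |xA \setminus A|$, and conversely $|\partial A| \ge |xA \setminus A|$ for each individual $x \in X$.

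For the forward direction, suppose $S$ satisfies FC. Given $\epsilon > 0$, apply FC with the finite set $H = X$ and tolerance $\epsilon/|X|$ to obtain a finite set $F$ with $|xF \setminus F| \le (\epsilon/|X|)|F|$ for all $x \in X$. Summing over the (at most $|X|$) generators gives $|\partial F| \le \sum_{x\in X}|xF\setminus F| \le \epsilon |F|$, so $|\partial F|/|F| \le \epsilon$; since $\epsilon$ was arbitrary, $\iota(\Gamma_l(S,X)) = 0$.

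For the converse, suppose $\iota(\Gamma_l(S,X)) = 0$; we must verify FC for an arbitrary finite $H \subseteq S$ and $\epsilon > 0$. First reduce to the case $H \subseteq X$: since $X$ generates $S$, each $h \in H$ can be written as a word $h = x_{i_1}\cdots x_{i_k}$ in generators, and by iterating the elementary inclusion $|h A \setminus A| \le \sum_{j} |x_{i_j} A_j \setminus A_j|$ along the factorization (where $A_j$ are the intermediate translates of $A$, each of the same cardinality as $A$ when we instead bound $|hA\setminus A|$ by a telescoping sum of $|x_{i_j}(x_{i_{j+1}}\cdots x_{i_k}A) \setminus (x_{i_{j+1}}\cdots x_{i_k}A)|$ terms) one sees that it suffices to control $|xA\setminus A|$ for $x\in X$; alternatively, and more cleanly, enlarge the generating set to include $H$, which is legitimate since the isoperimetric number's being zero is a quasi-isometry invariant (by \cite[Proposition~4]{K_semimetric} the quasi-isometry class of the Cayley graph is independent of generating set, and one checks isoperimetric number zero is preserved — or one argues directly that passing to a larger finite generating set changes $|\partial A|$ only by a bounded factor). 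Having arranged $H \subseteq X$, pick $A$ finite with $|\partial A|/|A| < \epsilon$; then for each $h \in H \subseteq X$ we get $|hA \setminus A| \le |\partial A| < \epsilon|A|$, which is exactly FC.

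The main obstacle is the reduction step in the converse: FC is stated for arbitrary finite $H \subseteq S$ while the isoperimetric number only sees the generators $X$ directly. Cleanly handling the passage from generators to arbitrary elements — either via the telescoping-sum inequality for $|hA \setminus A|$ along a generator-factorization of $h$, or via invariance of the "isoperimetric number zero" property under change of generating set — is the one place where care is needed; everything else is the routine identification $\partial A = \bigcup_{x\in X}(xA\setminus A)$ together with finite additivity of cardinality.
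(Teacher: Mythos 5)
Your forward direction and your overall strategy (identify $\partial A = \bigcup_{x\in X}(xA\setminus A)$, handle the generators directly, then reduce an arbitrary finite $H$ to words in the generators) coincide with the paper's proof. The gap is in the reduction step of the converse. The telescoping inequality you write down bounds $|hA\setminus A|$ by terms $|x_{i_j}A_j\setminus A_j|$ where $A_j = x_{i_{j+1}}\cdots x_{i_k}A$ are the intermediate translates along the Cayley-graph path; these are boundary-type quantities for the sets $A_j$, not for $A$, and the hypothesis $|\partial A|/|A|<\delta$ gives no control whatsoever over $|xB\setminus B|$ for a translate $B=wA$. Your parenthetical claim that the $A_j$ have the same cardinality as $A$ is also false in this setting: left translation in a general semigroup need not be injective, so $|wA|$ may be much smaller than $|A|$ (and equal cardinality would not make $|x A_j\setminus A_j|$ small anyway). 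So the conclusion that ``it suffices to control $|xA\setminus A|$ for $x\in X$'' does not follow from the inequality as stated. The alternative route you offer --- enlarging the generating set to contain $H$ --- is no better as it stands: the assertion that this changes $|\partial A|$ only by a bounded factor is essentially the statement being proved (each new generator $h$ contributes $hA\setminus A$ to the boundary), and invoking quasi-isometry invariance of isoperimetric number zero appeals to the later and substantially harder Theorem~\ref{thm_digraph_QSI}.

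The gap is repairable. Telescope instead along the prefix chain $A,\,u_1A,\,u_2A,\dots,u_kA=hA$ with $u_j=x_{i_1}\cdots x_{i_j}$: one has $u_jA\setminus u_{j-1}A=u_{j-1}(x_{i_j}A)\setminus u_{j-1}A\subseteq u_{j-1}(x_{i_j}A\setminus A)$, whence $|hA\setminus A|\le\sum_{j=1}^{k}|x_{i_j}A\setminus A|\le k\,|\partial A|$, and now every term is controlled by the single set $A$. (This is valid without any cancellativity because only the inclusion, not equality, is needed.) With that fix your argument is correct and is a legitimate alternative to the paper's, which instead argues by path-counting: every element of $hF\setminus F$ is the endpoint of a directed path of length at most $r$ in $\Gamma_l(S,X)$ starting inside $F$ and ending outside it, any such path must pass through $\partial F$, and hence $|hF\setminus F|\le|X^{\le r}|\,|\partial F|$. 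Both methods yield a bound $|hF\setminus F|\le C|\partial F|$ with $C$ depending only on $H$ and $X$, which is all that is required.
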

\begin{proof}
Suppose first that $S$ satisfies FC.
We must show that $\Gamma$ satisfies \eqref{eq_star}. Let $\epsilon > 0$
be given and choose  $\delta > 0$ with $\delta < \epsilon / |X|$. Since $S$
satisfies FC and $X$ is a finite set, we may choose a finite subset $F$
of $S$ such that
\[
(\forall x \in X)(|xF \setminus F| < \delta |F|).
\] 
We claim that by taking $A = F$  condition \eqref{eq_star} will be satisfied.
Indeed, by the definition of $\Gamma$ we have 
\[
\partial A = \bigcup_{x \in X} (xA \setminus A) =  \bigcup_{x \in X} (xF \setminus F),
\]
and so
\begin{align*}
|\partial A|
\ & = \ \left| \bigcup_{x \in X} (xF \setminus F) \right| \ \leq \ \sum_{x \in X} |xF \setminus F| \\
& < \ |X| \delta|F| \ < \ |X| (\epsilon / |X|) |F| \ = \ \epsilon |F| \ = \ \epsilon |A|, 
\end{align*}
which shows that \eqref{eq_star} holds. 

Conversely, suppose that $\Gamma$ has isoperimetric number zero or,
equivalently, suppose that \eqref{eq_star} holds. We must show that $S$
satisfies FC.
To this end, let $H \in \mathcal{P}_f(S)$ and $\epsilon > 0$ be given. Let
$r \in \mathbb{N}$ be such that every element $h \in H$ can be
written as a product of elements from $X$ of length at most $r$; such
an $r$ exists because $H$ is finite and $X$ is a generating set for
$S$. Write $X^{\leq r}$ for the set of elements of $S$ which can be
written as a product of elements of $X$ of length at most $r$. Set
$C = |X^{\leq r}|$ and $\delta = \epsilon / C$.
By \eqref{eq_star} we may choose a finite set $F \in \mathcal{P}_f(S)$
such that $|\partial F| / |F| < \delta$.

Let $h \in H$ be arbitrary and consider the set $hF \setminus F$. Write
$h = x_1 x_2 \ldots x_t$ where $x_i \in X$ and $t \leq r$. This is
possible by the definition of $r$. Consider an arbitrary element of
$hF \setminus F$, say $hf$ where $f \in F$. Then there is a path of
length $t$ in the
left Cayley graph from $f$ to $hf = x_1 x_2 \ldots x_t f$ with the edges
labelled $x_1, x_2, \cdots, x_t$. Since this path leads from a vertex
in $F$ to a vertex not in $F$, it must have at least one vertex in
$\partial F$. It follows that $hf$ can be written
as the product of an element of $X^{\leq r}$ and an element of $\partial F$.
Thus, there are at most $C |\partial F|$ choices of $hf \in hF \setminus F$,
so we have
\[
|hF \setminus F|
\ \leq \ C |\partial F|
\ < \ C \left( \delta |F| \right)
\ < \ C \left( \frac{\epsilon |F| }{C} \right)
\ = \ \epsilon |F|. 
\]
This holds for all $h \in H$ and therefore $S$ satisfies FC. 
\end{proof}

Next we shall show that the property of having isoperimetric number zero is a quasi-isometry invariant of directed graphs with bounded out-degree. We first need a lemma. 

\begin{lemma}\label{lem_const}
Let $\phi: V \Gamma \rightarrow V \Delta$ be a quasi-isometry between
digraphs. If $\Gamma$ has bounded out-degree then $\phi$ has bounded
fibre sizes.
\end{lemma}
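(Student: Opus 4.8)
The plan is to observe that a $\lambda$-quasi-isometry identifies to a single point only vertices that lie within a bounded distance of one another in $\Gamma$, and then to bound the number of vertices in such a region using the out-degree hypothesis.

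First I would fix $\lambda > 0$ so that $\phi$ is a $\lambda$-quasi-isometry, and fix $D \in \mathbb{N}$ bounding every out-degree in $\Gamma$. Let $v \in V\Delta$; if $\phi^{-1}(v) = \emptyset$ there is nothing to prove, so pick $x_0 \in \phi^{-1}(v)$. For any $y \in \phi^{-1}(v)$ we have $\phi(x_0) = \phi(y)$, hence $d_\Delta(\phi(x_0),\phi(y)) = 0$, and the right-hand inequality in the definition of quasi-isometry, applied to the ordered pair $(x_0,y)$, gives $d_\Gamma(x_0,y) \leq \lambda$. Since the values of $d_\Gamma$ are path lengths, hence integers, this says exactly that the whole fibre $\phi^{-1}(v)$ is contained in the out-ball $\{\, z \in V\Gamma : d_\Gamma(x_0,z) \leq \lfloor \lambda \rfloor \,\}$.

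It then suffices to bound, uniformly over the centre, the size of an out-ball of radius $\lfloor\lambda\rfloor$ in a digraph of out-degree at most $D$. An easy induction on $k$ shows that at most $D^k$ vertices lie at directed distance exactly $k$ from $x_0$ — a vertex at distance $k+1$ is an out-neighbour of some vertex at distance exactly $k$, and each vertex has at most $D$ out-neighbours — so the out-ball of radius $\lfloor\lambda\rfloor$ has at most $\sum_{k=0}^{\lfloor\lambda\rfloor} D^k$ vertices (which is $1$ when $D = 0$). This bound depends only on $\lambda$ and $D$, not on $v$, and so is the required uniform bound on fibre sizes.

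There is essentially no serious obstacle here; the only point worth a moment's thought is that $d_\Gamma$ is a semimetric rather than a metric, so one must not silently interchange $d_\Gamma(x_0,y)$ with $d_\Gamma(y,x_0)$. This causes no trouble, since the quasi-isometry estimate is hypothesised for \emph{every} ordered pair of vertices, so the single pair $(x_0,y)$ already delivers $d_\Gamma(x_0,y) \leq \lambda$; an in-ball argument using $d_\Gamma(y,x_0) \leq \lambda$ would serve equally well. The degenerate cases ($D = 0$, $\lambda < 1$, empty fibres) are all immediate.
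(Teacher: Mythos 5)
Your proposal is correct and follows essentially the same route as the paper: apply the upper quasi-isometry inequality to a pair of vertices in the same fibre to get $d_\Gamma(x_0,y) \leq \lambda$, then bound the out-ball of radius $\lfloor\lambda\rfloor$ by $\sum_{k=0}^{\lfloor\lambda\rfloor} D^k$ using the out-degree bound. The only cosmetic difference is that the paper counts directed paths of length at most $\lambda$ from $x_0$ rather than vertices at each exact distance, which yields the same geometric-series bound.
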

\begin{proof}
Let $k$ be an upper bound on the out-degree of $\Gamma$, and suppose
$\phi$ is a $\lambda$-quasi-isometry. Fix an element $x \in V \Gamma$ and
consider the set of elements $y \in V \Gamma$ with $\phi(y) = \phi(x)$.
For each such $y$ we have
$$d_\Gamma(x,y) \ \leq \ \lambda d_\Delta(\phi(x),\phi(y)) + \lambda \ = \ \lambda d(\phi(x),\phi(x)) + \lambda \ = \ \lambda.$$
Thus, every such $y$ is reachable by a directed path of length
at most $\lambda$ starting from $x$. But there are clearly no more than
$$\sum_{i=0}^{\lambda'} k^i$$
such paths, where $\lambda'$ is the integer part of $\lambda$, so this number, which
is independent of $x$, forms a bound on the fibre size.
\end{proof}

\begin{figure}
\begin{center}
\scalebox{1.5}
{
\begin{tikzpicture}
[scale=0.9, 
decoration={ 
markings,
mark=
at position 0.6 
with 
{ 
\arrow[scale=1]{stealth} 
} 
},
SLball/.style ={circle, draw=black!80, thick, fill=gray!15, minimum height=4em, opacity=0.6},
SRball/.style ={circle, draw=black, thick, fill=gray!15, minimum height=5em, opacity=0.7},
Sball/.style ={circle, draw=black!60, thick, fill=gray!15, minimum height=3em, opacity=0.25},
SSball/.style ={circle, draw=black!60, thick, fill=gray!15, minimum height=3em, opacity=.8}
],
\tikzstyle{vertex}=[circle,draw=black, fill=black, inner sep = 0.3mm]
\node (bLeft) at (0em,0em) [SLball] {};
\node (bRight) at (10em,0em) [SRball] {};
\node (b2) at (10em,2em) [Sball] {};
\node (b3) at (10em,-2em) [Sball] {};
\node (b4) at (12em,0em) [SSball] {};
\node (b5) at (8em,0em) [Sball] {};
\node (b6) at (11.5em,1.5em) [Sball] {};
\node (b7) at (8.5em,-1.5em) [Sball] {};
\node (b8) at (11.5em,-1.5em) [Sball] {};
\node (b9) at (8.5em,1.5em) [Sball] {};
\node (b1) at (10em,0em) [Sball] {};
\node (x)  at (1,0) {};
\node (y)  at (2.4,0) {};
\arrowdraw{x}{y}
\node (Gamma) at (0em,-6em) {\tiny{$\Gamma$}};
\node (Delta) at (10em,-6em) {\tiny{$\Delta$}};
\node (P) at (1.7,0.3) {\tiny{$\phi$}};
\node (A) at (0,0) {\tiny{$A$}};
\node (Q) at (12em,-4em) {\tiny{$Q$}};
\node (Ball) at (14em,-2em) {\tiny{$\mathcal{B}_\lambda(\phi(a))$}};
\node (PA) at (7em,-2em) {\tiny{$\phi(A)$}};
\node (a)  at (-1.3em,-0.8em) {\tiny{$a$}};
\node (b)  at (-1em,3.5em) {\tiny{$\partial A \ni b $ \; \; \; \; \ }};
\node (t)  at (4.2em,5em) {\tiny{$t$}};
\node (leq)  at (1.6em,3em) {\tiny{$\leq 2\lambda^2 + 2\lambda$}};
\node (Pa) at (12em,0em) {\tiny{$\phi(a)$}};
\node (y)  at (14em,2.5em) {\tiny{\; \; \;  \;  \ $y \in \partial Q$ }};
\node (z)  at (18em,-2em) {\tiny{\; \;  \ \ $z = \phi(t)$ }};
\node (leq2)  at (16.6em,0.6em) {\tiny{$\leq \lambda$}};
\draw[>=stealth, ->, snake=snake,segment amplitude = .4mm, segment length = 3mm] (a) -- (b);
\draw[>=stealth, ->, snake=snake,segment amplitude = .4mm, segment length = 4mm] (-0.7em,3.5em) -- (t);
\draw[>=stealth, ->, snake=snake,segment amplitude = .4mm, segment length = 3mm] (12.5em,0.5em) --  (14em,2em);
\draw[>=stealth, <->, snake=snake,segment amplitude = .4mm, segment length = 3mm] (14em,2em) -- (17.6em,-1.6em);
%
%
%
%
%
%
%
%
%
\end{tikzpicture}
}
\end{center}
\caption{Diagram showing the inequality $|\partial Q| \leq DE|\partial A|$ in the proof of Theorem~\ref{thm_digraph_QSI}. 
}\label{fig_qsi}
\end{figure}
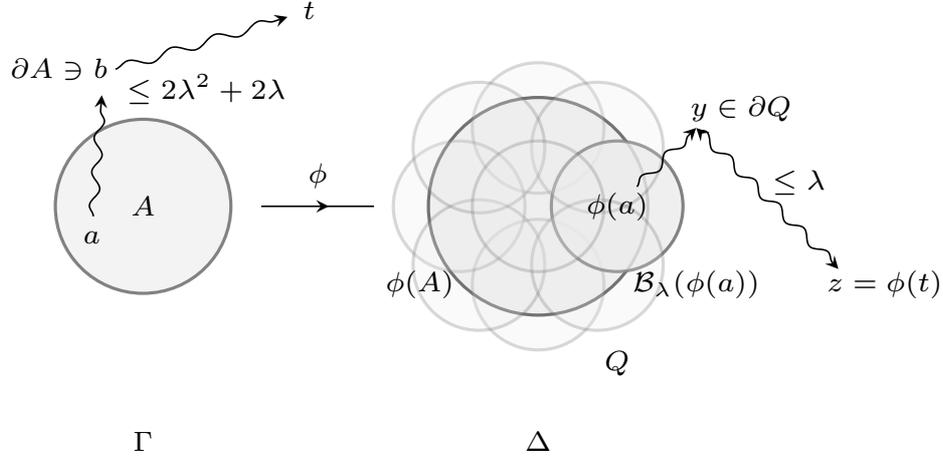

\begin{theorem}\label{thm_digraph_QSI}
Let $\Gamma$ and $\Delta$ be directed graphs both with bounded out-degree. 
If $\Gamma$ and $\Delta$ are quasi-isometric then $\iota(\Gamma)=0$ if and only if $\iota(\Delta)=0$. 
\end{theorem}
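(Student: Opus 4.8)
The plan is to show that the property $\iota = 0$ transfers across a quasi-isometry $\phi : V\Gamma \to V\Delta$ by pushing F\o lner-type sets forward (and, by symmetry of the argument applied to a quasi-inverse, backward). By Lemma~\ref{lem_const}, since $\Gamma$ has bounded out-degree, $\phi$ has bounded fibre sizes; write $E$ for a bound on the fibre sizes of $\phi$ and $D$ for a bound on the number of vertices in a ball of radius $2\lambda^2 + 2\lambda$ in $\Delta$ (such a $D$ exists because $\Delta$ has bounded out-degree). The key geometric claim, illustrated in Figure~\ref{fig_qsi}, is that for any finite set $A \subseteq V\Gamma$, taking $Q = \phi(A)$ we have
\[
|\partial Q| \ \leq \ DE\,|\partial A|.
\]

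To prove this claim, I would argue as follows. Let $y \in \partial Q$, so there is an edge into $y$ from some $\phi(a)$ with $a \in A$, and $y \notin Q$. The plan is to locate, for each such $y$, a vertex $b \in \partial A$ lying within bounded $\Gamma$-distance of a preimage of a vertex near $y$, and then bound the number of $y$ that can arise this way. Concretely: since $y$ is an out-neighbour of $\phi(a)$, the quasi-isometry inequalities give a vertex $t \in V\Gamma$ with $\phi(t)$ at bounded distance from $y$; since $y \notin \phi(A)$ we can arrange $t \notin A$ (any preimage works), while $a \in A$, so the directed path from $a$ to $t$ in $\Gamma$ — of length at most $\lambda d_\Delta(\phi(a),\phi(t)) + \lambda$, which is bounded by roughly $2\lambda^2 + 2\lambda$ — must pass through a vertex $b \in \partial A$. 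Thus every $y \in \partial Q$ determines a vertex $b \in \partial A$ with $d_\Delta(\phi(b), y)$ bounded (by $\lambda(2\lambda^2+2\lambda) + \lambda$, say), hence $y$ lies in a ball of bounded radius around $\phi(b)$. Each $b \in \partial A$ contributes at most $D$ such vertices $y$, and each $y$ might be "charged" to several $b$'s but we only need an upper bound, so $|\partial Q| \leq D|\phi(\partial A)| \leq DE|\partial A|$, where the last inequality uses that $\phi$ has fibres of size at most $E$ (so $|\phi(\partial A)| \le |\partial A|$ — actually this step gives $\le |\partial A|$ directly; the factor $E$ enters when we later compare $|Q|$ with $|A|$).

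With the claim in hand, the final step is bookkeeping on the ratio. Since $\phi$ has fibre sizes at most $E$, we have $|Q| = |\phi(A)| \geq |A|/E$, hence
\[
\frac{|\partial Q|}{|Q|} \ \leq \ \frac{DE\,|\partial A|}{|A|/E} \ = \ DE^2\,\frac{|\partial A|}{|A|}.
\]
Therefore, if $\iota(\Gamma) = 0$, choosing finite sets $A$ with $|\partial A|/|A|$ arbitrarily small produces finite sets $Q \subseteq V\Delta$ with $|\partial Q|/|Q|$ arbitrarily small, so $\iota(\Delta) = 0$. The reverse implication follows by the same argument with the roles of $\Gamma$ and $\Delta$ interchanged, using that quasi-isometry is symmetric (a quasi-isometry admits a quasi-inverse, by \cite[Proposition~1]{K_semimetric}) and that $\Delta$ also has bounded out-degree. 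I expect the main obstacle to be the careful verification that a preimage $t \notin A$ of a vertex near $y$ can always be found and that the directed path from $a$ to $t$ genuinely meets $\partial A$ — this requires being slightly careful about path directions in the digraph and about the precise constants coming from the quasi-isometry inequalities, but it is essentially routine once the picture in Figure~\ref{fig_qsi} is set up correctly.
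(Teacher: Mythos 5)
Your overall strategy (push a near-isoperimetric set $A$ forward, bound $|\partial Q|$ by a constant times $|\partial A|$ and $|Q|$ below by a constant times $|A|$, then symmetrise via a quasi-inverse) is the same as the paper's, but your choice $Q = \phi(A)$ contains a genuine gap, and it is exactly at the point you flagged as ``the main obstacle.'' Given $y \in \partial Q$, quasi-surjectivity of $\phi$ gives some $t \in V\Gamma$ with $\phi(t)$ within distance $\lambda$ of $y$ in both directions, but nothing forces $t \notin A$: the condition $y \notin \phi(A)$ only says $y$ is not \emph{equal} to any $\phi(a)$, and it is perfectly possible that the only vertices $z \in \phi(V\Gamma)$ near $y$ are of the form $\phi(t)$ with $t \in A$. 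In that case there is no directed path from $A$ to the complement of $A$ to intersect with $\partial A$, and the charging argument collapses. Worse, the key inequality $|\partial \phi(A)| \leq DE\,|\partial A|$ is simply false in general. For a concrete failure, let $\Delta$ be the (bidirected) Cayley graph of $\mathbb{Z}$ with generators $\pm 1$, let $\Gamma$ be the copy of $\mathbb{Z}$ sitting on the even integers with edges joining $2n$ and $2n \pm 2$, and let $\phi$ be the inclusion (a quasi-isometry). Taking $A = \lbrace -2N, \dots, 0, \dots, 2N \rbrace$ gives $|\partial A| = 2$ in $\Gamma$, while $\partial \phi(A)$ in $\Delta$ contains every odd integer between $-2N-1$ and $2N+1$, so $|\partial \phi(A)| / |\phi(A)|$ tends to $1$ rather than $0$. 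No uniform constant can repair this.

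The fix, which is the one idea your proposal is missing, is to \emph{thicken} the image: the paper takes $Q = \bigcup_{a \in A} \mathcal{B}_\lambda(\phi(a))$, where $\mathcal{B}_\lambda(x)$ is the set of vertices within distance $\lambda$ of $x$ in both directions. With this definition, if $y \in \partial Q$ and $t$ is any preimage of a vertex $z$ with $d_\Delta(z,y) \leq \lambda$ and $d_\Delta(y,z) \leq \lambda$, then $t \in A$ would force $y \in \mathcal{B}_\lambda(\phi(t)) \subseteq Q$, contradicting $y \notin Q$; so $t \notin A$ comes for free, the path from $a$ to $t$ in $\Gamma$ (of length at most $2\lambda^2 + 2\lambda$) genuinely crosses $\partial A$, and the count $|\partial Q| \leq DE\,|\partial A|$ goes through. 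The lower bound $|Q| \geq |\phi(A)| \geq |A|/C$ still holds since $\phi(A) \subseteq Q$. The rest of your bookkeeping, and the symmetrisation via a quasi-inverse, is fine once this is in place (modulo straightening out which of your constants bounds paths in $\Gamma$ versus in $\Delta$).
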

\begin{proof}
Let $\phi: \Gamma \rightarrow \Delta$ be a $\lambda$-quasi-isometry. Let
$C$ be the bound on the fibre size of $\phi$ given by Lemma~\ref{lem_const}.
Since $\Delta$ has bounded out-degree, we may choose an upper bound (call
it $E$) on the number of directed paths in $\Delta$ originating at any one
vertex and having length at most $\lambda$. Similarly, let $D$ be an
upper bound on the number of directed paths in $\Gamma$ originating at any
one vertex and having length at most $2 \lambda^2 + 2 \lambda$.

Suppose that $\iota(\Gamma)=0$. We claim that $\iota(\Delta)=0$, so, we must show that in the digraph $\Delta$ we have
\[
(\forall \epsilon > 0) \; (\exists Q \in \mathcal{P}_f(V\Delta)) \; : \; \frac{|\partial Q|}{|Q|} < \epsilon. 
\]
Let $\epsilon > 0$ be given. 
Since $\iota(\Gamma)=0$ we may choose a finite subset $A$ of $V\Gamma$
such that
\begin{equation}\label{eq_zero}
\frac{|\partial A|}{|A|} \ < \ \frac{\epsilon}{CDE}.
\end{equation}
Set
\[
Q \ = \ \bigcup_{a \in A} \mathcal{B}_{\lambda} (\phi(a)) \ \subseteq \ V\Delta,
\]
where
$$\mathcal{B}_{\lambda}(x) \ = \ \lbrace a \in V\Delta \mid d(a,x) \leq \lambda \textrm{ and } d(x,a) \leq \lambda \rbrace.$$
We claim that $\frac{|\partial Q|}{|Q|} < \epsilon$ and the rest of the proof will be devoted to establishing this fact. 

Firstly, since $\phi(A) \subseteq Q$, by Lemma~\ref{lem_const} we have
\begin{equation}\label{eq_one}
|Q| \ \geq \ |\phi(A)| \ \geq \ \frac{|A|}{C}.
\end{equation}
Now we want to estimate the size of the set $\partial Q$, seeking an upper bound in terms of $|\partial A|$.  

In $\Delta$ let $y \in \partial Q$ be an arbitrary element of the out-boundary of the set $Q$. 
Since $\phi$ is a $\lambda$-quasi-isometry there is a vertex
$z \in \phi(V\Gamma)$ such that
\begin{equation}\label{eq_nice}
d_{\Delta}(z,y) \leq \lambda  \textrm{ and } d_{\Delta}(y,z) \leq \lambda. 
\end{equation}
Let $t \in V \Gamma$ be such that $\phi(t) = z$.
Notice that $t \notin A$; indeed,
 $y \in \mathcal{B}_{\lambda} (z) = \mathcal{B}_{\lambda} (\phi(t))$ so
if $t$ were in $A$ then, by the definition of $Q$, we would have
$y \in Q$ contradicting $y \in \partial Q$. 

Now by the definitions of $Q$ and $\partial Q$, there exists $a \in A$
such that
\[
d_{\Delta}(\phi(a), y) \ \leq \ \lambda + 1. 
\]
Together with \eqref{eq_nice} and the directed triangle inequality this gives
\[
d_{\Delta}(\phi(a), \phi(t)) \ = \ d_{\Delta}(\phi(a), z) \ \leq \ 2\lambda + 1. 
\]
Applying the quasi-isometry inequality to this, we obtain
\[
d_{\Gamma}(a,t) \ \leq \ \lambda \ d_{\Delta}(\phi(a), \phi(t)) + \lambda \ \leq \ \lambda(2\lambda + 1) + \lambda \ = \ 2\lambda^2 + 2\lambda. 
\]
So in the digraph $\Gamma$ we have $a \in A$, $t \not\in A$, and there is a
directed path from $a$ to $t$ in $\Gamma$ of length at most 
$2\lambda^2 + 2\lambda$. Any such path must include at least one vertex
from the out-boundary $\partial A$. It follows that there is a directed
path of length at most $2\lambda^2 + 2\lambda$ from some vertex
$b \in \partial A$ to the vertex $t$. The number of possibilities for
$b$ is $|\partial A|$, and once $b \in \partial A$ is chosen, there are
at most $D$ possible paths of the given length. We conclude that the
number of possibilities for the vertex $t$ is bounded above by
$D |\partial A|$. Since $z = \phi(t)$ it follows that the number of
distinct vertices $z$ that can occur in the above argument is also
bounded above by $D |\partial A|$. But every $y \in \partial Q$ is
reachable from some such $z$ by a path of length at most $\lambda$, and
there are at most $E$ such paths from each $z$, so we obtain
\begin{equation}\label{eq_two}
|\partial Q| \ \leq \ DE |\partial A|. 
\end{equation}
Finally, combining equations \eqref{eq_zero}, \eqref{eq_one} and \eqref{eq_two} 
we obtain
\[
\frac{|\partial Q|}{|Q|} \ \leq \ \left( \frac{C}{|A|} \right) \left( D E |\partial A| \right) \ < \ \epsilon
\]
as required to complete the proof. 
\end{proof}

An immediate consequence of the above results is the following. 

\begin{theorem}\label{thm_qsi}
The F\o lner condition FC is a left Cayley graph quasi-isometry invariant of finitely generated semigroups. 
\end{theorem}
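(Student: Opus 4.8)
The plan is to chain together the three main ingredients already established in this section. First I would recall that, by \cite[Proposition~4]{K_semimetric}, the left quasi-isometry class of a finitely generated semigroup $S$ is well defined: any two left Cayley graphs $\Gamma_l(S,X)$ and $\Gamma_l(S,Y)$ arising from finite generating sets $X,Y$ are quasi-isometric. So it suffices to show that, given finitely generated semigroups $S$ and $T$ with left Cayley graphs $\Gamma_l(S,X)$ and $\Gamma_l(T,Y)$ that are quasi-isometric, $S$ satisfies FC if and only if $T$ does.

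The key observation making the machinery applicable is that a left Cayley graph with respect to a \emph{finite} generating set has bounded out-degree: from any vertex $s$, the out-edges go to the vertices $xs$ for $x \in X$, so the out-degree is at most $|X|$. Hence both $\Gamma_l(S,X)$ and $\Gamma_l(T,Y)$ are digraphs of bounded out-degree. Now I would combine the two earlier results: by Proposition~\ref{prop_isop}, $S$ satisfies FC if and only if $\iota(\Gamma_l(S,X)) = 0$, and likewise $T$ satisfies FC if and only if $\iota(\Gamma_l(T,Y)) = 0$; and by Theorem~\ref{thm_digraph_QSI}, since the two graphs are quasi-isometric and both have bounded out-degree, $\iota(\Gamma_l(S,X)) = 0$ if and only if $\iota(\Gamma_l(T,Y)) = 0$. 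Stringing these biconditionals together gives that $S$ satisfies FC if and only if $T$ does, which is exactly the claim.

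There is essentially no obstacle here: all the real work has been done in Proposition~\ref{prop_isop} and Theorem~\ref{thm_digraph_QSI}. The only point requiring any care is the trivial-but-necessary verification that finite generating sets yield bounded out-degree, so that the hypotheses of Theorem~\ref{thm_digraph_QSI} are met; and the appeal to \cite[Proposition~4]{K_semimetric} to make the statement independent of the choice of generators. I would then remark, as a corollary in the same spirit as the abstract's claims, that since FC coincides with left amenability for left cancellative semigroups (as noted after the definitions), left amenability is a left quasi-isometry invariant within the class of finitely generated left cancellative semigroups.
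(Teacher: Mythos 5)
Your argument is correct and is essentially identical to the paper's own (one-line) proof, which simply cites Proposition~\ref{prop_isop} and Theorem~\ref{thm_digraph_QSI}; your added checks that finite generating sets give bounded out-degree and that the quasi-isometry class is independent of generators are exactly the implicit details the paper relies on.
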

\begin{proof}
This follows from Proposition~\ref{prop_isop} and Theorem~\ref{thm_digraph_QSI}. 
\end{proof}

For left cancellative semigroups, where the conditions of left amenability,
FC and SFC coincide, this yields a direct generalisation of a well-known
result for groups:
\begin{corollary}\label{cor_AM_QSI}
Left amenability is a left quasi-isometry invariant of finitely generated
left cancellative semigroups. 
\end{corollary}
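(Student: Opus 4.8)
The plan is to deduce Corollary~\ref{cor_AM_QSI} directly from Theorem~\ref{thm_qsi} together with the observations made earlier in the paper about left cancellative semigroups. First I would recall that for a left cancellative semigroup the three conditions ``left amenable'', ``FC'' and ``SFC'' all coincide: FC and SFC are trivially equivalent in the presence of left cancellativity (left translation is injective, so $|sF\setminus F|$ and $|F\setminus sF|$ are related by $|sF|=|F|$), SFC implies left amenability by Argabright and Wilde \cite{Argabright67}, and left amenability implies FC by Day \cite{Day57}. Hence ``left amenable'' and ``FC'' are the same property within the class of finitely generated left cancellative semigroups.

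Next I would invoke Theorem~\ref{thm_qsi}, which says that FC is a left Cayley graph quasi-isometry invariant of finitely generated semigroups. So if $S$ and $T$ are finitely generated left cancellative semigroups that are left quasi-isometric, then $S$ satisfies FC if and only if $T$ does; by the preceding paragraph this is the same as saying $S$ is left amenable if and only if $T$ is left amenable. That is precisely the claim that left amenability is a left quasi-isometry invariant within this class.

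The one point requiring a word of care is that Theorem~\ref{thm_qsi} is stated for finitely generated semigroups in general, and its proof goes through Proposition~\ref{prop_isop} (relating FC to the left Cayley graph having isoperimetric number zero) and Theorem~\ref{thm_digraph_QSI} (invariance of isoperimetric number zero among digraphs of bounded out-degree); the left Cayley graph of a semigroup with respect to a finite generating set has out-degree bounded by the size of that generating set, so the bounded out-degree hypothesis is automatically satisfied. Nothing in that chain uses cancellativity, so no subtlety arises there; the role of left cancellativity is confined entirely to the identification of FC with left amenability. I expect no genuine obstacle here — the corollary is an immediate packaging of Theorem~\ref{thm_qsi} with standard facts — so the only thing to be careful about is to state clearly which implications among FC, SFC and left amenability are being used and to cite \cite{Argabright67} and \cite{Day57} (or simply the discussion in Section~2) for them.
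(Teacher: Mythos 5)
Your proposal is correct and matches the paper's (implicit) argument exactly: the paper derives the corollary from Theorem~\ref{thm_qsi} by noting that left amenability, FC and SFC coincide for left cancellative semigroups. Your additional remark that the bounded out-degree hypothesis of Theorem~\ref{thm_digraph_QSI} is automatic for Cayley graphs over finite generating sets is a sensible point of care but raises no real issue.
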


In contrast, the fact that left reversibility is not visible in the left
Cayley graph (Proposition~\ref{prop_leftrevqsi}) means that neither
SFC nor left amenability are left quasi-isometry invariants for
finitely generated semigroups in general. Indeed, we have already seen
that the
$2$-element left zero semigroup, which is not left reversible (and hence
not left amenable), is left quasi-isometric to the amenable (and hence SFC)
group $\mathbb{Z}_2$. However, since left reversibility is a right
quasi-isometry invariant (Proposition~\ref{prop_leftrevqsi}),
it is natural to ask whether SFC and/or left amenability are invariants
of the right quasi-isometry class, or of the two quasi-isometry classes
considered together.

\begin{question}
Can SFC and/or left amenability of a finitely generated semigroup be determined from (i)
the right quasi-isometry class or (ii) the left and right quasi-isometry
classes together?
\end{question}

\bibliographystyle{plain}

\def\cprime{$'$} \def\cprime{$'$}

\end{document}